\newtheorem{theorem}{Theorem}[section]
\newtheorem{proposition}[theorem]{Proposition}
\newtheorem{lemma}[theorem]{Lemma}
\newtheorem{corollary}[theorem]{Corollary}
\theoremstyle{definition}
\newtheorem{definition}[theorem]{Definition}
\newtheorem{remark}[theorem]{Remark}
\newtheorem{example}[theorem]{Example}
\newcommand{\R}{\ensuremath{\mathbb{R}}}
\newcommand{\N}{\ensuremath{\mathbb{N}}}
\newcommand{\Z}{\ensuremath{\mathbb{Z}}}
\newcommand{\Q}{\mathbb{Q}}
\newcommand{\cL}{\ensuremath{\mathcal{L}}}
\newcommand{\dd}{\,\mathrm{d}}
\newcommand{\veps}{\varepsilon}
\DeclareMathOperator{\supp}{supp}
\begin{document}

\title[Linear operators and the {L}iouville theorem]{The {L}iouville theorem and linear operators satisfying the maximum principle
}

\author[N.~Alibaud]{Natha\"el Alibaud}
\address[N.~Alibaud]{ENSMM\\
26 Chemin de l'Epitaphe\\ 25030 Besan\c{c}on cedex\\ Fran\-ce
and\\
LMB\\ UMR CNRS 6623\\ Universit\'e de Bourgogne Franche-Comt\'e (UBFC)\\ France}
\email{nathael.alibaud\@@{}ens2m.fr}
\urladdr{https://lmb.univ-fcomte.fr/Alibaud-Nathael}

\author[F.~del Teso]{F\'elix del Teso}
\address[F.~del Teso]{Departamento de An\'alisis Matem\'atico y Matem\'atica Aplicada\\
Universidad Complutense de Madrid (UCM)\\
28040 Madrid, Spain} 
\email{fdelteso\@@{}ucm.es}
\urladdr{https://sites.google.com/view/felixdelteso}

\author[J. Endal]{J\o rgen Endal}
\address[J. Endal]{Department of Mathematical Sciences\\
Norwegian University of Science and Technology (NTNU)\\
N-7491 Trondheim, Norway} 
\email{jorgen.endal\@@{}ntnu.no}
\urladdr{http://folk.ntnu.no/jorgeen}

\author[E.~R.~Jakobsen]{Espen R. Jakobsen}
\address[E.~R.~Jakobsen]{Department of Mathematical Sciences\\
Norwegian University of Science and Technology (NTNU)\\
N-7491 Trondheim, Norway} 
\email{espen.jakobsen\@@{}ntnu.no}
\urladdr{http://folk.ntnu.no/erj}

\subjclass[2010]{
{35B10, 
35B53, 
35J70, 
35R09, 
60G51, 
65R20}}  

\keywords{
Nonlocal degenerate elliptic operators, Courr\`ege  theorem,  L\'evy-Khintchine formula, Liouville theorem, periodic solutions, propagation of maximum, subgroups of $\R^d$, Kronecker theorem}

\begin{abstract}
A result by  Courr\`ege  says that linear translation invariant operators satisfy the
maximum  principle  if and only if they are of the form
$\cL=\cL^{\sigma,b}+\cL^\mu$ where
$$
\cL^{\sigma,b}[u](x)=\textup{tr}(\sigma \sigma^{\texttt{T}} D^2u(x))+b\cdot
 Du(x)
$$  
and
$$
\cL^\mu[u](x)=\int_{\R^d\setminus\{0\}} \big(u(x+z)-u(x)-z\cdot Du(x) \mathbf{1}_{|z| \leq
  1}\big) \dd \mu(z).$$
This class of operators coincides with the infinitesimal generators of L\'evy
processes in probability theory.
In this paper we give a complete characterization of the operators of this form that satisfy the Liouville theorem:
Bounded solutions $u$ of $\cL[u]=0$ in $\R^d$ are constant. The
Liouville property is obtained as a consequence of  a periodicity
result that completely characterizes bounded distributional solutions
of $\cL[u]=0$ in $\R^d$. 
The proofs combine arguments from PDEs and group theory. They are simple and short. 
\end{abstract}

\maketitle 

\section{Introduction and main results}
\label{sec:intro}
The classical Liouville theorem states that bounded
solutions of $\Delta u=0$ in $\R^d$ are constant.  The Laplace
operator $\Delta$ is the most classical example of an operator $\cL:C^\infty_\textup{c} (\R^d) \to C(\R^d)$
satisfying the maximum principle in the sense that 
\begin{equation}\label{mp}
\mbox{$\cL [u](x) \leq 0$ at any global maximum point $x$ of $u$.}
\end{equation}
In the class of linear
translation invariant\footnote{Translation invariance means that
  $\cL[u(\cdot+y)](x)=\cL[u](x+y)$ for all $x,y$.} operators
(which includes 
$\Delta$), a result by Courr\`ege \cite{Cou64}\footnote{If \eqref{mp}
holds at any {\it nonnegative} maximum point, then by definition the
{\it positive}  maximum principle holds and by \cite{Cou64} 
there is an extra term $c u(x)$ with $c \leq 0$ in
\eqref{def:localOp1}. For the purpose of this paper (Liouville and periodicity), the case $c<0$ is trivial since then $u=0$
is the unique bounded solution of $\mathcal{L}[u]=0$.} 
says that the maximum
principle holds if and only if 
\begin{equation}\label{eq:GenOp1}
\cL=\cL^{\sigma,b}+\cL^\mu,
\end{equation}
where 
\begin{align}\label{def:localOp1}
\cL^{\sigma,b}[u](x)&=\textup{tr}(\sigma \sigma^{\texttt{T}} D^2u(x))+b \cdot Du(x),\\
\label{def:levy1}
\cL^\mu[u](x)&=\int_{\R^d\setminus\{0\}} \big(u(x+z)-u(x)-z \cdot Du(x) \mathbf{1}_{|z| \leq 1}\big) \dd \mu(z),
\end{align}
and
\begin{align}
& b\in\R^d, \ \ \text{and}
  \ \ \text{$\sigma=(\sigma_1,\ldots,\sigma_P)\in\R^{d\times P}$ for
    $P\in\N$,  $\sigma_j \in\R^d$,}\label{as:sigmab}\tag{$\textup{A}_{\sigma,b}$}\\
&\mu\geq0 \ \text{is a Radon measure on $\R^d\setminus\{0\}$, $\int_{\R^d\setminus\{0\}} \min\{|z|^2,1\} \dd \mu(z)<\infty$.}\label{as:mus}\tag{$\textup{A}_{\mu}$}
\end{align}
These 
 elliptic  operators have a local part $\cL^{\sigma,b}$ and a  nonlocal part
$\cL^\mu$, either of which could be zero.\footnote{The representation
   \eqref{eq:GenOp1}--\eqref{def:localOp1}--\eqref{def:levy1} is 
   unique up to the choice of a cut-off function in
   \eqref{def:levy1} and a square root $\sigma$ of
   $a=\sigma\sigma^\texttt{T}$. In this paper we always use
   $\mathbf{1}_{|z| \leq 1}$ as a cut-off function.}

Another point of view 
of these operators 
comes from probability and
stochastic processes: Every operator mentioned above is the generator
of a L\'evy process, and conversely, every generator of a L\'evy process
is of the form given above. L\'evy processes are Markov
processes with stationary independent increments and are the
prototypical models of noise in 
science, engineering, and finance. Well-known examples are Brownian
motions, Poisson processes, 
stable processes, and various other types of jump processes.

\smallskip

{\em The main contributions of this paper are the following: 

\smallskip

\begin{enumerate}[ \bf 1.]
\item We give necessary and sufficient conditions for $\cL$ to have
the Liouville property: Bounded solutions $u$ of $\cL[u]=0$ in $\R^d$ are
constant.
\smallskip
\item For general $\cL$, we show that all bounded solutions of $\cL[u]=0$ in $\R^d$ are periodic and we  identify  the set of admissible periods.
\end{enumerate}}

Let us  now state our results.   
For a set $S\subseteq \R^d$, we let $G(S)$ denote the smallest additive  subgroup of $\R^d$
containing $S$ and define the subspace $V_S\subseteq \overline{G(S)}$ by
\begin{equation*}
V_S:=\Big\{g \in \overline{G(S)} \ :\ t g \in \overline{G(S)} \mbox{ } \forall t \in \R \Big\}.
\end{equation*} 
Then we take $\supp(\mu)$ to be the support of the measure $\mu$ and
define
\begin{align*}
G_\mu:=\overline{G(\supp(\mu))},\quad V_\mu:=V_{\supp (\mu)},\quad\text{and}\quad
c_\mu:=-\int_{\{|z|\leq1\}\setminus V_\mu} z \dd\mu(z).
\end{align*} 
Here $c_\mu$ is well-defined and uniquely determined by $\mu$, cf. Proposition \ref{def-prop}.  We also need the subspace
$W_{\sigma,b+c_\mu}:=\textup{span}_{\R}\{\sigma_1,\ldots,\sigma_P,b+c_\mu\}$. 
\begin{theorem}[General Liouville]\label{thm:Liouville}
Assume \eqref{as:sigmab} and \eqref{as:mus}.  Let $\cL$ be given by \eqref{eq:GenOp1}--\eqref{def:localOp1}--\eqref{def:levy1}.  Then the
following statements are equivalent:
\smallskip
\begin{enumerate}[\rm(a)]
\item\label{a4} 
If $u\in L^\infty(\R^d)$ satisfies
  $\cL[u]=0$ in $\mathcal{D}'(\R^d)$, then $u$ is a.e. a constant.
  \smallskip
\item\label{label:thmLiou:c} 
$\overline{G_\mu+W_{\sigma,b+c_\mu}}=\R^d$. 
\end{enumerate}
\end{theorem}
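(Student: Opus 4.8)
I would prove (\ref{a4})$\Leftrightarrow$(\ref{label:thmLiou:c}) by establishing the two implications separately, and I would prove ``(\ref{a4})$\Rightarrow$(\ref{label:thmLiou:c})'' in contrapositive form, since that is the concrete half: if (\ref{label:thmLiou:c}) fails, i.e.\ $H:=\overline{G_\mu+W_{\sigma,b+c_\mu}}\neq\R^d$, one can write down a bounded non-constant solution. Indeed $H$ is the closure of an additive subgroup of $\R^d$, hence a proper closed subgroup, so by the structure of closed subgroups of $\R^d$ (equivalently, Pontryagin duality) there is $\xi_0\in\R^d\setminus\{0\}$ with $x\cdot\xi_0\in2\pi\Z$ for all $x\in H$. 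Since $\supp(\mu)\subseteq G_\mu\subseteq H$ this gives $z\cdot\xi_0\in2\pi\Z$ for all $z\in\supp(\mu)$; and since $W_{\sigma,b+c_\mu}$ and $V_\mu$ are \emph{subspaces} contained in $H$, the integrality forces $\xi_0\perp W_{\sigma,b+c_\mu}$ and $\xi_0\perp V_\mu$ (if $tg\cdot\xi_0\in2\pi\Z$ for all $t\in\R$ then $g\cdot\xi_0=0$). I then take $u(x):=\cos(\xi_0\cdot x)$ and verify $\cL[u]=0$ directly: $\textup{tr}(\sigma\sigma^{\texttt{T}}D^2u)=-\big(\sum_j(\sigma_j\cdot\xi_0)^2\big)\cos(\xi_0\cdot x)=0$; in $\cL^\mu[u]$ the identity $\cos(\xi_0\cdot(x+z))=\cos(\xi_0\cdot x)$ on $\supp(\mu)$ leaves only the cut-off term, so $\cL^\mu[u](x)=\sin(\xi_0\cdot x)\int_{\{|z|\leq1\}}(z\cdot\xi_0)\dd\mu(z)=-(c_\mu\cdot\xi_0)\sin(\xi_0\cdot x)$ (the part of the integral over $V_\mu$ drops since $\xi_0\perp V_\mu$, and it is well-defined by Proposition~\ref{def-prop}); adding $b\cdot Du=-(b\cdot\xi_0)\sin(\xi_0\cdot x)$, the total equals $-\big((b+c_\mu)\cdot\xi_0\big)\sin(\xi_0\cdot x)=0$. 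Hence $u$ is bounded, non-constant, and a classical (so distributional) solution, and (\ref{a4}) fails.

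For ``(\ref{label:thmLiou:c})$\Rightarrow$(\ref{a4})'' I would argue by propagation of the maximum. First, mollification reduces to smooth data: since $\cL$ is translation invariant, $u_\veps:=u*\rho_\veps$ solves $\cL[u_\veps]=0$ too, lies in $C^\infty$ with all derivatives bounded, and if each $u_\veps$ is constant then so is $u$. Next, translating along a maximizing sequence and extracting a locally uniform limit (compactness holds by the uniform derivative bounds, and \eqref{as:mus} provides the domination needed to pass $\cL[\cdot]=0$ to the limit) reduces to a smooth bounded solution $v$ with $v(0)=M:=\sup v$. At any maximum point $x_0$ of $v$ one has $Dv(x_0)=0$ and $D^2v(x_0)\le0$, so, the cut-off term $z\cdot Dv(x_0)\mathbf{1}_{|z|\leq1}$ dropping, $\cL[v](x_0)=0$ reads $\textup{tr}(\sigma\sigma^{\texttt{T}}D^2v(x_0))+\int_{\R^d\setminus\{0\}}\big(v(x_0+z)-M\big)\dd\mu(z)=0$, a sum of two non-positive terms, so each vanishes. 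Thus $v\equiv M$ on $x_0+\supp(\mu)$, and since $D^2v(x_0)\le0$ with $\textup{tr}(\sigma\sigma^{\texttt{T}}D^2v(x_0))=0$ gives $D^2v(x_0)\sigma_j=0$, the classical strong maximum principle for the (constant-coefficient, degenerate-elliptic) local part yields $v\equiv M$ on $x_0+\R\sigma_j$ as well.

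It remains to show that the closed coincidence set $F:=\{v=M\}$ contains all of $\overline{G_\mu+W_{\sigma,b+c_\mu}}$; under~(\ref{label:thmLiou:c}) this is $\R^d$, so $v\equiv M$ and $u$ is constant. So far $F$ is invariant under $\supp(\mu)$ and under each $\R\sigma_j$, hence it contains the closure of the semigroup generated by $\supp(\mu)$, translated by $\mathrm{span}\{\sigma_1,\dots,\sigma_P\}$. Upgrading this to invariance under the full group $G_\mu$ and under the remaining drift line $\R(b+c_\mu)$ is the crux, and I would combine: (i) a second use of the equation $\cL[v]=0$ at points $y$ whose forward shifts $y+\supp(\mu)$ already lie in $F$ (plus, when the local part is genuinely parabolic, an interior gradient estimate as in the Liouville theorem for bounded ancient solutions of the heat equation), which propagates $F$ ``backward'' in the directions the maximum principle misses; and (ii) elementary group theory — the structure of closed subgroups of $\R^d$ and the Kronecker/Weyl fact that a closed subsemigroup of a compact abelian group containing $0$ is a subgroup — which is exactly where $c_\mu$ enters, supplying the drift generated by the small jumps transversal to $V_\mu$. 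I expect this last step, and in particular the propagation in the drift direction $b+c_\mu$, to be the main obstacle: it is not a one-shot maximum-principle statement, but requires orchestrating the forward propagation of the maximum, its backward propagation via the equation, and the arithmetic of $\supp(\mu)$ modulo $W_{\sigma,b+c_\mu}$.
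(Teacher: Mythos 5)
Your contrapositive half, (\ref{a4})$\Rightarrow$(\ref{label:thmLiou:c}), is correct: for a proper closed subgroup one indeed finds $\xi_0\neq 0$ with $\xi_0\cdot g\in 2\pi\Z$ on $\overline{G_\mu+W_{\sigma,b+c_\mu}}$, orthogonality to the subspaces $W_{\sigma,b+c_\mu}$ and $V_\mu$ follows, and $u(x)=\cos(\xi_0\cdot x)$ is then a bounded nonconstant classical solution; your use of Proposition~\ref{def-prop} to make sense of $\int_{\{|z|\le1\}}(z\cdot\xi_0)\dd\mu(z)=-c_\mu\cdot\xi_0$ is exactly the right point. This is essentially the paper's counterexample $U(x)=\cos(2\pi\lambda_x)$ built from Corollary~\ref{pro:group-multid}, only you verify $\cL[u]=0$ by direct computation while the paper deduces it from the periodicity characterization (Theorem~\ref{thm:PeriodGeneralOp}).

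The direction (\ref{label:thmLiou:c})$\Rightarrow$(\ref{a4}) as you propose it has genuine gaps. First, reducing to a translated limit $v$ with $v(0)=\sup v$ and then trying to show that the coincidence set $\{v=\sup v\}$ is all of $\R^d$ cannot yield (\ref{a4}): even if $v\equiv\sup v$, $v$ is only a locally uniform limit of translates of $u$, and this does not make $u$ itself constant. The paper's key trick is to run the translation--compactness argument on the \emph{differences} $u(\cdot+\bar z)-u(\cdot)$, $\bar z\in\supp(\mu)$, and to kill their supremum $M$ through the iteration $u_\infty((m+1)\bar z)=u_\infty(0)+(m+1)M$ against boundedness (Lemma~\ref{lem:L0ImpusuppmuPer}); this gives exact $\supp(\mu)$-periodicity of $u$ everywhere, and the group (not just semigroup) structure of the set of periods is then automatic (Lemma~\ref{suppmuPerGsuppmuPer}), so your semigroup-versus-group concern never arises. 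Second, the step ``$\textup{tr}(\sigma\sigma^{\texttt{T}}D^2v(x_0))=0$ at one point plus the classical strong maximum principle gives $v\equiv\sup v$ on $x_0+\R\sigma_j$'' is unjustified: $v$ solves the full nonlocal equation, not $\cL^{\sigma,b}[v]=0$, and away from global maximum points the sign of $\cL^\mu[v]$ is uncontrolled, so $v$ is not a subsolution of the degenerate local operator on any neighbourhood, and pointwise second-order information at a single point does not propagate along $\R\sigma_j$. Third, the step you yourself flag as the main obstacle --- propagation in the drift direction $b+c_\mu$ and in the $\sigma_j$-directions --- is precisely what remains unproved; in the paper it is handled by first having $G_\mu$-periodicity of $u$, then replacing $\cL^\mu[u]$ by $c_\mu\cdot Du$ on such functions (Proposition~\ref{lem:drift}), so that $\cL^{\sigma,b+c_\mu}[u]=0$, and finally the heat-kernel argument of Proposition~\ref{thm:PeriodLiouvilleLocal} giving $W_{\sigma,b+c_\mu}$-periodicity. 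As written, your second half is a programme whose decisive steps are missing or would fail in the stated form.
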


 The  above Liouville result is a consequence of  a periodicity result
for bounded solutions of $\cL[u]=0$ 
in $\R^d$. For a set $S\subseteq \R^d$, a function $u\in L^\infty(\R^d)$ is a.e. 
\emph{$S$-periodic} if 
$u(\cdot+s)=u(\cdot)$ in $\mathcal{D}'(\R^d)$   $\forall s\in S$. Our
result is the following:
\begin{theorem}[General periodicity]\label{thm:PeriodGeneralOp}
Assume \eqref{as:sigmab}, \eqref{as:mus}, and $u\in
L^\infty(\R^d)$. Let $\cL$ be given by \eqref{eq:GenOp1}--\eqref{def:localOp1}--\eqref{def:levy1}. Then the
following statements are equivalent:
\begin{enumerate}[\rm(a)]
\item
\label{a3}
$\cL[u]=0$ in $\mathcal{D}'(\R^d)$.
 \smallskip 
\item\label{b3} $u$ is a.e. $\overline{G_\mu+W_{\sigma,b+c_\mu}}$-periodic. 
\end{enumerate}
\end{theorem}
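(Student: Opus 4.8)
The plan is to treat the implication (b)$\Rightarrow$(a) as a short computation and to concentrate on (a)$\Rightarrow$(b). In both directions I would first reduce to $u\in C^\infty(\R^d)$ with all derivatives bounded, by mollification: for $u_\veps:=u*\rho_\veps$ one has $\|u_\veps\|_\infty\le\|u\|_\infty$, $\cL[u_\veps]$ is a genuine bounded continuous function (the integral in \eqref{def:levy1} converges by \eqref{as:mus}), and, since $\cL$ is translation invariant, $\cL[u_\veps]=\cL[u]*\rho_\veps$; hence $\cL[u]=0$ in $\mathcal D'(\R^d)$ iff $\cL[u_\veps]\equiv0$ for every $\veps$, and $u$ is a.e.\ $S$-periodic iff every $u_\veps$ is $S$-periodic. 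So assume $u\in C^\infty_b$ with bounded derivatives, and write $H:=\overline{G_\mu+W_{\sigma,b+c_\mu}}$, $\beta:=b+c_\mu$. For (b)$\Rightarrow$(a): if $u$ is $H$-periodic then, as $W_{\sigma,\beta}\subseteq H$ is a linear subspace containing $\sigma_1,\dots,\sigma_P$ and $\beta$, $u$ is constant along all these directions, so $Du\perp W_{\sigma,\beta}$, giving $\textup{tr}(\sigma\sigma^{\texttt{T}}D^2u)=\sum_j(\sigma_j\cdot D)^2u\equiv0$ and $\beta\cdot Du\equiv0$, whence $\cL^{\sigma,b}[u]=b\cdot Du=-c_\mu\cdot Du$; since also $G_\mu\subseteq H$, $u(\cdot+z)\equiv u$ on $\supp\mu$ and $Du\perp V_\mu$, so in \eqref{def:levy1} the term $u(x+z)-u(x)$ drops, the $V_\mu$-part of the rest drops, and $\cL^\mu[u](x)=-\int_{\{|z|\le1\}\setminus V_\mu}z\cdot Du(x)\dd\mu(z)=c_\mu\cdot Du(x)$ by Proposition~\ref{def-prop}; adding, $\cL[u]\equiv0$.

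For (a)$\Rightarrow$(b) I would first show $u$ is $G_\mu$-periodic, by propagation of the maximum. Fix $e\in\supp\mu$ and put $v:=u-u(\cdot+e)$, so $\cL[v]=0$ by translation invariance. Suppose $L:=\sup v>0$; pick $x_n$ with $v(x_n)\to L$ and extract from $\{u(\cdot+x_n)\}$ a limit $u_\infty$ in $C^k_{\mathrm{loc}}$ for every $k$ (possible since the derivatives of $u$ are bounded). Passing to the limit in the equation — legitimate since the integrand in \eqref{def:levy1} is dominated by $C\min\{|z|^2,1\}\in L^1(\mu)$ — gives $\cL[u_\infty]=0$, hence $\cL[v_\infty]=0$ for $v_\infty:=u_\infty-u_\infty(\cdot+e)$, while $v_\infty\le L$ and $v_\infty(0)=L$. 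At the maximum point $0$ one has $Dv_\infty(0)=0$ and $D^2v_\infty(0)\le0$, so $\cL^{\sigma,b}[v_\infty](0)=\textup{tr}(\sigma\sigma^{\texttt{T}}D^2v_\infty(0))\le0$ and $\cL^\mu[v_\infty](0)=\int_{\R^d\setminus\{0\}}(v_\infty(z)-L)\dd\mu(z)\le0$; their sum being $0$, both vanish, and vanishing of the second forces $v_\infty\equiv L$ on $\supp\mu$. Every point of $\supp\mu$ is again a global maximum of $v_\infty$, so iterating yields $v_\infty\equiv L$ on the additive semigroup generated by $\supp\mu$; in particular $v_\infty(ne)=L$ for all $n\ge1$, and then $nL=\sum_{k=0}^{n-1}v_\infty(ke)=u_\infty(0)-u_\infty(ne)\le2\|u\|_\infty$ for all $n$, a contradiction. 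Hence $u\le u(\cdot+e)$, and the same argument applied to $-v$ gives the reverse inequality, so $u(\cdot+e)\equiv u$. As $e\in\supp\mu$ was arbitrary and $\{s\in\R^d:u(\cdot+s)\equiv u\}$ is a closed subgroup, it contains $G_\mu$.

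Now $u$ is $G_\mu$-periodic, so the computation of $\cL^\mu[u]$ above applies and turns $\cL[u]=0$ into the local constant-coefficient equation $\textup{tr}(\sigma\sigma^{\texttt{T}}D^2u)+\beta\cdot Du=0$ in $\R^d$. For this (possibly very degenerate) equation I would pass to the Fourier transform: $u\in\mathcal S'(\R^d)$ and $(-\xi^{\texttt{T}}\sigma\sigma^{\texttt{T}}\xi+i\beta\cdot\xi)\widehat u=0$, so $\widehat u$ is supported in the zero set of the symbol, which (a complex number vanishes iff its real and imaginary parts do) equals $\{\xi:\sigma^{\texttt{T}}\xi=0,\ \beta\cdot\xi=0\}=W_{\sigma,\beta}^\perp$. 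A bounded tempered distribution whose Fourier transform is supported in a subspace is polynomial — hence, being bounded, constant — in the transverse variables; applied here this says $u$ is $W_{\sigma,\beta}$-periodic. (Alternatively this step can be done by the Step 1 argument, using the strong maximum principle for $\textup{tr}(\sigma\sigma^{\texttt{T}}D^2\cdot)+\beta\cdot D$ to propagate the maximum along the $\sigma_j$'s and $\beta$.) Combining the two parts, $u$ is periodic with respect to both $G_\mu$ and $W_{\sigma,\beta}$; since $\{s:u(\cdot+s)\equiv u\}$ is a closed subgroup of $\R^d$, it contains $\overline{G_\mu+W_{\sigma,\beta}}=H$, which is (b).

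The main obstacle is the first step of (a)$\Rightarrow$(b): classical propagation of the maximum needs a solution attaining its supremum, which may fail here, so one has to recover it through the translation–compactness argument; and because both $\sigma\sigma^{\texttt{T}}$ and $\mu$ may degenerate severely, one must track precisely which semigroup/subspace of directions the maximum spreads along — and it is exactly this set of directions that recombines into $H=\overline{G_\mu+W_{\sigma,b+c_\mu}}$. The remaining ingredients (the Fourier, or strong-maximum-principle, Liouville theorem for the local equation, and the observation that the set of periods is a closed subgroup) are comparatively routine.
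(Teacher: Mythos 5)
Your proof is correct and, for the nonlocal core, follows essentially the same route as the paper: mollification to reduce to $C^\infty_\textup{b}$ via $\cL[u_\veps]=\cL[u]\ast\rho_\veps$ (Lemma \ref{lem:smoothReduction}); $\supp(\mu)$-periodicity by the translation--compactness argument, propagation of maxima, and the telescoping/boundedness contradiction (Lemmas \ref{max-prop} and \ref{lem:L0ImpusuppmuPer}); passage to $G_\mu$ because the set of periods is a closed subgroup (Lemma \ref{suppmuPerGsuppmuPer}); and the replacement of $\cL^\mu[u]$ by $c_\mu\cdot Du$ using $Du\perp V_\mu$ and the integrability of $|z|$ on $\{|z|\leq 1\}\setminus V_\mu$ (Propositions \ref{def-prop} and \ref{lem:drift}), which reduces everything to the constant-coefficient local equation $\textup{tr}(\sigma\sigma^{\texttt{T}}D^2u)+(b+c_\mu)\cdot Du=0$. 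The one genuine divergence is the local step: the paper proves $W_{\sigma,b+c_\mu}$-periodicity by the heat-kernel argument of Proposition \ref{thm:PeriodLiouvilleLocal} (solving the heat equation in the auxiliary variables $(y,t)$, letting $s\to-\infty$, then invoking the classical Liouville theorem), whereas you take Fourier transforms and use that $\widehat{u}$ is supported in $W_{\sigma,b+c_\mu}^{\perp}$. Your symbol computation is right, but the assertion that a bounded tempered distribution with $\supp\widehat{u}$ in a subspace $V^\perp$ is ``polynomial, hence constant, in the transverse variables'' is stated too quickly: a distribution supported on a subspace is not literally such a polynomial without a structure/finite-order argument. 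The standard repair is to test against $\psi\in C^\infty_\textup{c}$ in the $V^\perp$-variables: for each such $\psi$ the bounded function $x'\mapsto\int u(x',x'')\psi(x'')\dd x''$ has Fourier transform supported at the origin, hence is constant, and density of tensor products gives $u(\cdot+v)=u$ for all $v\in V=W_{\sigma,b+c_\mu}$. With that supplement (or simply the paper's heat-kernel argument, which avoids Fourier machinery altogether), your proof is complete; note also that your parenthetical alternative via a strong maximum principle for the degenerate operator $\textup{tr}(\sigma\sigma^{\texttt{T}}D^2\cdot)+(b+c_\mu)\cdot D$ is less routine than suggested, since propagation of maxima along drift and diffusion directions for degenerate local operators is a Bony-type result rather than an elementary one.
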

This result characterizes the bounded solutions for all operators
$\cL$ in our class, also those not satisfying the Liouville
property. Note that if  $\overline{G_\mu+W_{\sigma,b+c_\mu}}=\R^d$, then $u$
is constant and the Liouville result follows. Both theorems  are proved in Section \ref{sec:periodandliou}.

We give examples 
in
Section \ref{sec:examples}. Examples \ref{ex1} and \ref{ex2} provide an overview of different possibilities,
and Examples \ref{ex:finitenumberofpoints} and \ref{ex:kro} are concerned with the case where $\textup{card} \left(\textup{supp} (\mu) \right)<\infty$. The Liouville property holds in the latter case if and only if $\textup{card} \left( \textup{supp} (\mu) \right) \geq d-\textup{dim} \left(W_{\sigma,b+c_\mu} \right)+1$ with additional algebraic conditions in relation with Diophantine approximation. The Kronecker theorem (Theorem
\ref{thm:CharKron}) is a key ingredient in this discussion and
a slight change
in the data may destroy the Liouville property.

The class of operators $\cL$ given by \eqref{eq:GenOp1}--\eqref{def:localOp1}--\eqref{def:levy1} is large  and
diverse. 
In addition to the processes
mentioned above, 
 it 
includes also discrete random
walks, constant coefficient It\^{o}- and L\'evy-It\^{o} processes, and most
processes used as driving noise in finance.
 Examples  of nonlocal operators 
 are  fractional
  Laplacians \cite{Lan72},  convolution 
  operators  \cite{Cov08,A-VMaRoT-M10,BrChQu12},
relativistic Schr\"odinger operators \cite{FaWe16}, 
and the
CGMY model in finance \cite{CoTa04}.  
We mention that discrete  finite difference  operators can be written
in the form
\eqref{eq:GenOp1}--\eqref{def:localOp1}--\eqref{def:levy1},
cf. \cite{DTEnJa18b}. For more examples, see Section
\ref{sec:examples}.   

There is a huge literature on the Liouville  theorem.  
In the local case, we simply refer to the survey \cite{Far07}. In the nonlocal case, the Liouville theorem  is more or less understood
for  fractional Laplacians  or variants 
\cite{Lan72,BoKuNo02,CaSi14, ChDALi15, Fal15}, 
certain L\'evy operators 
\cite{BaBaGu00,PrZa04, ScWa12, R-OSe16a,DTEnJa17a}, 
relativistic Schr\"{o}dinger operators \cite{FaWe16}, or convolution
operators \cite{CD60,BrChQu12, BrCo18, BrCoHaVa19}. The techniques vary from Fourier analysis, potential theory, probabilistic methods, to classical PDE arguments. 

To prove that solutions of $\cL[u]=0$ are $G_\mu$-periodic, we rely on propagation of maximum points
\cite{CD60,Cov08,Cio12,DTEnJa17b,DTEnJa17a,HuDuWu18,BrCo18, BrCoHaVa19} and a
localization technique \`a la
\cite{CD60,BeHaRo07,Ros09,BrCoHaVa19}. As far as we know, Choquet and
Deny \cite{CD60} were the first to obtain such results. They were concerned with
the equation $u \ast \mu-u=0$
for some bounded measure $\mu$. This is a particular case of our equation since
$u \ast \mu-u=\cL^\mu[u]+\int_{\R^d \setminus \{0\}} z  \mathbf{1}_{|z| \leq 1} \dd \mu(z) \cdot Du$. 
For general $\mu$, the drift $\int_{\R^d \setminus \{0\}} z  \mathbf{1}_{|z| \leq 1} \dd \mu(z) \cdot Du$ may not make sense and
the identification of the full drift $b+c_\mu$ relies on a standard decomposition of closed subgroups of $\R^d$, see e.g. \cite{Mar03}. The idea is to establish $G_\mu$-periodicity of solutions of $\cL[u]=0$ as in \cite{CD60}, and then use that $G_\mu=V_\mu \oplus \Lambda$ for the vector space $V_\mu$ previously defined and some discrete group $\Lambda$. This will roughly speaking remove the singularity $z=0 \in V_\mu$ in the computation of $c_\mu$ because $\int_{\R^d \setminus \{0\}}\mathbf{1}_{z \in V_\mu}z \mathbf{1}_{|z| \leq 1} \dd \mu(z) \cdot Du=0$ for any $G_\mu$-periodic function. See Section \ref{sec:periodandliou} for details.

Our approach then combines PDEs and group arguments, extends the results of \cite{CD60} to Courr\`ege/L\'evy operators,
yields necessary and sufficient conditions for the Liouville property, and provides short and simple proofs.

\subsubsection*{Outline of the paper} 
Our main results (Theorems \ref{thm:Liouville} and \ref{thm:PeriodGeneralOp}.) were stated in Section \ref{sec:intro}. They are proved in Section  \ref{sec:periodandliou} and examples are given in Section \ref{sec:examples}.

\subsubsection*{Notation and preliminaries}
 The {\it support} of a measure
 $\mu$
 is defined as
\begin{equation}\label{def-support}
{\supp(\mu)} := \left\{z\in  \R^d \setminus \{0\}  \ : \ \mu(B_r(z))>0, \ \forall r>0\right\},
\end{equation}
 where $B_r(z)$ is the ball of center $z$
and  radius $r$.  
 To continue, we assume \eqref{as:sigmab}, \eqref{as:mus}, and $\cL$ is given by \eqref{eq:GenOp1}--\eqref{def:localOp1}--\eqref{def:levy1}. 

\begin{definition}
For  any $u \in L^\infty( \R^d )$, $\mathcal{L}[u] \in \mathcal{D}'(\R^d)$ is defined by 
\begin{equation*}
\langle \cL[u],\psi \rangle:=\int_{\R^d }u(x)\cL^*[\psi](x)\dd x \quad  \forall \psi\in C_\textup{c}^\infty( \R^d )
\end{equation*}
with $\cL^*:=\cL^{\sigma,-b}+\cL^{\mu^*}$ and $\dd \mu^*(z):= \dd \mu(-z)$. 
\end{definition}

 The above distribution is well-defined since  $\cL^*:W^{2,1}(\R^d) \to L^1(\R^d)$ is bounded.

\begin{definition}
Let $S \subseteq \R^d$ and $u \in L^\infty(\R^d)$, then $u$  
is a.e. \emph{$S$-periodic} if 
\[
\int_{\R^d} \big(u(x+s)-u(x)\big) \psi(x) \dd x=0 \quad \forall s\in S, \forall \psi \in C^\infty_\textup{c}(\R^d).
\]
\end{definition}

The following technical result will be needed to regularize
 distributional solutions of $\cL[u]=0$ and a.e. periodic
 functions. Let the mollifier
$\rho_\veps(x):=\frac1{\veps^{d}}\rho(\frac x\veps)$, $\veps>0$, for
 some $0\leq\rho\in
C_{\textup{c}}^\infty(\R^d)$ with $\int_{\R^d}\rho =1$.

\begin{lemma}\label{lem:smoothReduction} 
 Let $u\in
L^\infty(\R^d)$ and $u_\veps:=\rho_\veps*u$. Then:  
\smallskip
\begin{enumerate}[{\rm (a)}]
\item\label{a1} $\cL[u]=0$ in $\mathcal{D}'(\R^d)$ if and only if
  $\cL[u_\veps]=0$ in $\R^d$ for all $\veps>0$.\smallskip
\item\label{b1} $u$ is a.e. $S$-periodic if and only if $u_\veps$ is $S$-periodic for all $\veps>0$.
\end{enumerate}
\end{lemma}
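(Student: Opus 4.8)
The plan is to prove the two equivalences of Lemma \ref{lem:smoothReduction} by exploiting the fact that convolution with $\rho_\veps$ commutes with the translation-invariant operators $\cL$ and $\cL^*$ as well as with shifts by elements $s\in\R^d$. For part \eqref{a1}, first I would observe that $\cL[u_\veps]$ makes pointwise sense in $\R^d$ (not merely distributionally), since $u_\veps=\rho_\veps*u\in C^\infty(\R^d)$ with all derivatives bounded, so in particular $u_\veps\in W^{2,\infty}_{\mathrm{loc}}$ and the integral defining $\cL^\mu[u_\veps]$ converges absolutely by \eqref{as:mus} together with the bounds $|u_\veps(x+z)-u_\veps(x)|\le \|Du_\veps\|_\infty|z|$ for $|z|\le 1$ and $|u_\veps(x+z)-u_\veps(x)|\le 2\|u_\veps\|_\infty$ for $|z|\ge 1$. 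The key identity is that for $\psi\in C_\textup{c}^\infty(\R^d)$,
\[
\int_{\R^d}\cL[u_\veps](x)\,\psi(x)\dd x=\int_{\R^d}u_\veps(x)\,\cL^*[\psi](x)\dd x=\int_{\R^d}u(x)\,(\rho_\veps*\cL^*[\psi])(x)\dd x,
\]
where the first equality is the (classical) integration-by-parts/Fubini identity valid for the smooth bounded function $u_\veps$, and the second uses $\int u_\veps\,\varphi=\int u\,(\tilde\rho_\veps*\varphi)$ with $\tilde\rho_\veps(x)=\rho_\veps(-x)$ — here I would either assume $\rho$ even or carry the reflection explicitly. Then I would identify $\rho_\veps*\cL^*[\psi]=\cL^*[\rho_\veps*\psi]$ by translation invariance of $\cL^*$ and differentiation under the integral sign, so the right-hand side equals $\langle\cL[u],\rho_\veps*\psi\rangle$ (up to reflection).

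With this identity in hand, the ``only if'' direction of \eqref{a1} is immediate: if $\langle\cL[u],\varphi\rangle=0$ for all test functions $\varphi$, then in particular for $\varphi=\tilde\rho_\veps*\psi$, giving $\int\cL[u_\veps]\psi=0$ for all $\psi$, hence $\cL[u_\veps]\equiv 0$ since $\cL[u_\veps]$ is continuous. For the ``if'' direction, I would use that $u_\veps\to u$ in $\mathcal{D}'(\R^d)$ as $\veps\to 0$ (indeed in $L^1_{\mathrm{loc}}$, or against any fixed test function), so $\langle\cL[u],\psi\rangle=\lim_{\veps\to 0}\langle\cL[u_\veps],\psi\rangle$; but wait — one must be careful that $\langle\cL[u_\veps],\psi\rangle$ equals $\int u_\veps\cL^*[\psi]$, which does converge to $\int u\,\cL^*[\psi]=\langle\cL[u],\psi\rangle$ because $\cL^*[\psi]\in L^1(\R^d)$ (as noted in the excerpt, $\cL^*:W^{2,1}\to L^1$ is bounded and $\psi\in C_\textup{c}^\infty$) and $u_\veps\to u$ weakly-$*$ against $L^1$ functions. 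So if each $\cL[u_\veps]=0$ then $\langle\cL[u],\psi\rangle=0$ for all $\psi$, as desired.

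For part \eqref{b1}, the argument is the same template but easier. The ``only if'' direction: if $\int(u(x+s)-u(x))\varphi(x)\dd x=0$ for all $\varphi\in C_\textup{c}^\infty$ and all $s\in S$, then $u_\veps(\cdot+s)-u_\veps(\cdot)=\rho_\veps*(u(\cdot+s)-u(\cdot))$ vanishes identically as a continuous function, since for each $x$ its value is $\int(u(x+s-y)-u(x-y))\rho_\veps(y)\dd y=0$ by taking $\varphi=\rho_\veps(x-\cdot)$. The ``if'' direction: if $u_\veps$ is $S$-periodic for all $\veps>0$, then $0=\int(u_\veps(x+s)-u_\veps(x))\psi(x)\dd x=\int(u(x+s)-u(x))(\tilde\rho_\veps*\psi)(x)\dd x$; letting $\veps\to 0$ and using $\tilde\rho_\veps*\psi\to\psi$ uniformly with support in a fixed compact set, together with $u\in L^\infty$, gives $\int(u(x+s)-u(x))\psi(x)\dd x=0$ for all $\psi\in C_\textup{c}^\infty$ and all $s\in S$.

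The main obstacle — really the only delicate point — is bookkeeping the reflection in the definition of the adjoint and making sure the commutation $\rho_\veps*\cL^*[\psi]=\cL^*[\rho_\veps*\psi]$ and the pointwise computation $\int\cL[u_\veps]\psi=\int u_\veps\cL^*[\psi]$ are justified for the genuinely nonlocal part $\cL^{\mu}$; this requires an application of Fubini's theorem whose hypotheses must be checked using the integrability $\int\min\{|z|^2,1\}\dd\mu<\infty$ and the $W^{2,\infty}$ bounds on $u_\veps$, splitting the $z$-integral into $|z|\le 1$ and $|z|>1$. Everything else is the soft functional-analytic fact that convolution commutes with translation-invariant operators and that mollifications converge suitably.
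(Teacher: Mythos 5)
Your argument is correct, and for part (a) it is in substance the same as the paper's: the paper simply notes that $\cL[u_\veps]=\cL[u]\ast\rho_\veps$ in $\mathcal{D}'(\R^d)$, which is exactly the commutation identity you establish by hand through the adjoint and Fubini; your passage to the limit $\veps\to0$ (using $u_\veps\to u$ a.e.\ with the uniform bound $\|u_\veps\|_\infty\le\|u\|_\infty$ against $\cL^*[\psi]\in L^1$) is the standard way to recover $\cL[u]=0$ from $\cL[u]\ast\rho_\veps=0$. Where you genuinely diverge is part (b): the paper deduces it from part (a) by the slick observation that the difference operator $\phi\mapsto\phi(\cdot+s)-\phi(\cdot)$ is itself an operator of the class \eqref{eq:GenOp1}--\eqref{def:localOp1}--\eqref{def:levy1} (take $\mu=\delta_s$, with the drift chosen to cancel the cut-off term when $|s|\le1$), whereas you prove (b) directly by elementary mollification; your route is slightly longer but more self-contained and avoids having to fuss over the cut-off in the $\mu=\delta_s$ reduction, while the paper's trick shows how periodicity statements are special cases of the equation itself. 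One small inaccuracy to fix: for the absolute convergence of $\cL^\mu[u_\veps]$ on $\{|z|\le1\}$ the bound $|u_\veps(x+z)-u_\veps(x)|\le\|Du_\veps\|_\infty|z|$ is not enough, since $\int_{|z|\le1}|z|\dd\mu(z)$ may be infinite; you must bound the \emph{compensated} increment $u_\veps(x+z)-u_\veps(x)-z\cdot Du_\veps(x)$ by $\tfrac12\|D^2u_\veps\|_\infty|z|^2$, which is what \eqref{as:mus} and the $W^{2,\infty}$ bounds you invoke later actually require.
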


\begin{proof}
 The proof of \eqref{a1} is standard since $\cL[u_\veps]=\cL[u] \ast \rho _\veps$ in $\mathcal{D}'(\R^d)$.
Moreover \eqref{b1}  follows from \eqref{a1} since for any $s\in S$ we can
take $\cL[\phi](x)=  \phi(x+s)-\phi(x)$ by choosing $\sigma,b=0$ and
$\mu=\delta_s$ (the Dirac measure at $s$) in \eqref{eq:GenOp1}--\eqref{def:localOp1}--\eqref{def:levy1}.
\end{proof}

\section{Proofs}\label{sec:periodandliou}

This section is devoted to the proofs of Theorems \ref{thm:Liouville} and \ref{thm:PeriodGeneralOp}. We first reformulate the classical Liouville theorem for local operators in terms of periodicity, then study the influence of the nonlocal part. 

\subsection{ $W_{\sigma,b}$-periodicity for local operators}

 Let us recall the Liouville theorem  for 
operators of the form \eqref{def:localOp1}, see e.g. \cite{Nel61,Miy15}.  
In
the result we use the set
\begin{equation*}
W_{\sigma,b}=\textup{span}_{\R}\{\sigma_1,\ldots,\sigma_P,b\}.
\end{equation*}
Note that $\textup{span}_{\R}\{\sigma_1,\ldots,\sigma_P\}$ equals the
span of the eigenvectors of $\sigma\sigma^\texttt{T}$ corresponding to nonzero eigenvalues. 
\begin{theorem}[Liouville for $\cL^{\sigma,b}$]\label{thm:LiouvilleLocal}
Assume \eqref{as:sigmab} and $\cL^{\sigma,b}$ is given by
\eqref{def:localOp1}. Then the following statements are equivalent: 
\smallskip
\begin{enumerate}[{\rm (a)}]
\item If $u\in L^\infty(\R^d)$ solves $\cL^{\sigma,b}[u]=0$ in
  $\mathcal{D}'(\R^d)$, then $u$ is a.e. constant in~$\R^d$.
  \smallskip
\item $W_{\sigma,b}=\R^d$.
\end{enumerate}
\end{theorem}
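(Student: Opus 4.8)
The plan is to reduce to the case of smooth solutions using Lemma \ref{lem:smoothReduction}, and then treat the two implications separately by classical ODE/PDE arguments along the directions spanned by $\sigma_1,\dots,\sigma_P$ and $b$. Throughout we may assume $u = u_\veps$ is smooth and bounded, since $\cL^{\sigma,b}[u]=0$ in $\mathcal{D}'(\R^d)$ iff $\cL^{\sigma,b}[u_\veps]=0$ in $\R^d$ for all $\veps>0$, and a.e. constancy of $u$ is equivalent to constancy of all $u_\veps$.

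For the direction (b) $\Rightarrow$ (a): assume $W_{\sigma,b}=\R^d$. If $\sigma\sigma^{\texttt{T}}$ is nondegenerate (i.e. $\textup{span}_\R\{\sigma_1,\dots,\sigma_P\}=\R^d$), then $\cL^{\sigma,b}$ is a uniformly elliptic operator with constant coefficients and the classical Liouville theorem for such operators applies directly (see e.g. \cite{Nel61,Miy15}). In general, one diagonalizes: write $\R^d = E \oplus E^\perp$ where $E = \textup{span}_\R\{\sigma_1,\dots,\sigma_P\}$ is the span of the eigenvectors of $\sigma\sigma^{\texttt{T}}$ with nonzero eigenvalue. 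In the variables along $E$ the operator is uniformly elliptic, so freezing the $E^\perp$-variables and applying the elliptic Liouville theorem shows $u$ does not depend on the $E$-component; then $\cL^{\sigma,b}[u]=0$ collapses to the transport equation $b\cdot Du = 0$, and since $b$ must then span $E^\perp$ (as $W_{\sigma,b}=\R^d = E\oplus E^\perp$), integrating along the flow $x\mapsto x+tb$ forces $u$ to be constant in the remaining direction as well. Here boundedness of $u$ is what rules out non-constant affine-type solutions.

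For the contrapositive of (a) $\Rightarrow$ (b): if $W_{\sigma,b}\neq\R^d$, pick a nonzero vector $\xi$ orthogonal to $W_{\sigma,b}$, so $\sigma^{\texttt{T}}\xi = 0$ and $b\cdot\xi = 0$. Then $u(x):=\cos(\xi\cdot x)$ (or $\sin(\xi\cdot x)$) is bounded, non-constant, and a direct computation gives $\cL^{\sigma,b}[u]=-|\sigma^{\texttt{T}}\xi|^2 u - (b\cdot\xi)\sin(\xi\cdot x)=0$, so (a) fails. This exhibits a bounded non-constant solution, completing the equivalence.

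The main obstacle is the degenerate case in (b) $\Rightarrow$ (a): one cannot directly invoke the standard uniformly elliptic Liouville theorem, and care is needed to justify that solving a uniformly elliptic equation in a subspace of variables (with the orthogonal variables as frozen parameters) indeed forces independence of those variables — this is where one uses that a bounded entire solution of a constant-coefficient uniformly elliptic equation in $\R^k$ is constant, applied uniformly in the parameters, together with continuity/regularity of $u$. The transport step is then elementary. Since the paper cites \cite{Nel61,Miy15} for the local Liouville theorem, I expect the actual proof to lean on those references for the elliptic ingredient and handle only the decomposition and transport reduction in detail.
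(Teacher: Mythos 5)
Your reduction to smooth bounded solutions via Lemma \ref{lem:smoothReduction} and your counterexample for (a) $\Rightarrow$ (b) (taking $0\neq\xi\perp W_{\sigma,b}$ and $u(x)=\cos(\xi\cdot x)$, which is annihilated by $\cL^{\sigma,b}$ since $\sigma^{\texttt{T}}\xi=0$ and $b\cdot\xi=0$) are correct and essentially the standard argument. The gap is in (b) $\Rightarrow$ (a), in the degenerate case where $E:=\textup{span}_\R\{\sigma_1,\dots,\sigma_P\}\neq\R^d$; under (b) this forces $\dim E=d-1$ and $b\notin E$, so $b$ has a nonzero component along $E^\perp$. Your key step, ``freezing the $E^\perp$-variables and applying the elliptic Liouville theorem shows $u$ does not depend on the $E$-component,'' does not work: the restriction of $u$ to a slice $\{x_{E^\perp}=\text{const}\}$ does \emph{not} satisfy any closed elliptic equation, because the drift term $b\cdot Du$ contains the transverse derivative $(b\cdot\nu)\,\partial_\nu u$ with $\nu$ spanning $E^\perp$ and $b\cdot\nu\neq0$. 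Concretely, for $\cL=\partial_{x_1}^2+\partial_{x_2}$ in $\R^2$ the slice function $x_1\mapsto u(x_1,x_2)$ satisfies $u_{x_1x_1}=-u_{x_2}$, not Laplace's equation, so there is no elliptic Liouville theorem to invoke ``uniformly in the parameters''; the obstacle is not a uniformity issue but the absence of a decoupled equation. What is genuinely needed here is a \emph{parabolic} Liouville theorem: bounded solutions of a constant-coefficient drift-diffusion equation on all of space-time (eternal solutions) are constant. This is exactly what the paper supplies in the proof of Proposition \ref{thm:PeriodLiouvilleLocal}: one lifts $u$ to $v(x,y,t):=u(x+\sigma y-bt)$, checks $\partial_t v=\Delta_y v$ on $\R^d\times\R^P\times\R$, represents $v$ by the heat kernel from time $s$, lets $s\to-\infty$ using $\|\Delta_y K_P(\cdot,t-s)\|_{L^1}\to0$ to conclude $\Delta_y v=0$, and then applies the harmonic Liouville theorem and the conservation of mass of the kernel to get constancy in $y$ and $t$; this yields $W_{\sigma,b}$-periodicity of $u$, and (b) $\Rightarrow$ (a) follows. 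Your transport step at the end is fine once independence of the $E$-variables is established, but as written the argument establishing that independence is invalid and must be replaced by a parabolic (or Fourier/heat-semigroup) argument of the above type.
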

Let us now  reformulate and prove  this classical result as a consequence of a periodicity result, a type
of argument that will be crucial in the nonlocal case. We will consider $C^\infty_{\textup{b}}(\R^d)$ solutions, which will be enough later during the proofs of Theorem \ref{thm:Liouville} and \ref{thm:PeriodGeneralOp}, thanks to Lemma \ref{lem:smoothReduction}.

\begin{proposition}[Periodicity for $\cL^{\sigma,b}$]\label{thm:PeriodLiouvilleLocal}
Assume \eqref{as:sigmab}, $\cL^{\sigma,b}$ is given by
\eqref{def:localOp1}, and $u\in C^\infty_{\textup{b}}(\R^d)$. Then the following
statements are
equivalent: 
\begin{enumerate}[{\rm (a)}]
  \smallskip
\item\label{a2} $\cL^{\sigma,b}[u]=0$ in $\R^d$.
  \smallskip
\item\label{b2} $u$ is $W_{\sigma,b}$-periodic.  
\end{enumerate}
\end{proposition}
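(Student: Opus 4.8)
The plan is to prove the equivalence \eqref{a2} $\Leftrightarrow$ \eqref{b2} by reducing to the one-dimensional classical Liouville/Laplace situation via a linear change of variables that diagonalizes the quadratic part and aligns the drift.

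First I would observe the implication \eqref{b2} $\Rightarrow$ \eqref{a2} is easy: if $u$ is $W_{\sigma,b}$-periodic and smooth, then for each $j$ the directional derivative $\sigma_j \cdot Du$ vanishes (a smooth function periodic along a vector has zero derivative in that direction — here we really need $W_{\sigma,b}$-periodicity, i.e. periodicity along every vector in the span, so that the derivative vanishes along a basis), hence $D^2 u$ restricted to the range of $\sigma$ is zero and $\mathrm{tr}(\sigma\sigma^\texttt{T} D^2 u)=\sum_j \sigma_j \cdot D^2u\, \sigma_j=0$; similarly $b \cdot Du=0$; so $\cL^{\sigma,b}[u]=0$ in $\R^d$.

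The substantive direction is \eqref{a2} $\Rightarrow$ \eqref{b2}. Here the strategy is: let $W=W_{\sigma,b}$, and split $\R^d=W \oplus W^\perp$. I would argue that $u$ does not depend on the $W$-variables, which is stronger than $W$-periodicity but is in fact what one gets. Choose coordinates so that $\sigma\sigma^\texttt{T}$ is block-diagonal: on $W$ it acts with a (possibly degenerate, possibly zero) nonnegative symmetric part, while on $W^\perp$ it is zero; and $b \in W$. The equation $\mathrm{tr}(\sigma\sigma^\texttt{T} D^2 u) + b\cdot Du = 0$ then only involves derivatives in the $W$-directions. Fixing the $W^\perp$-variable $y$, the function $x \mapsto u(x,y)$ is a bounded (by $\|u\|_{L^\infty}$, uniformly in $y$) smooth solution on $W \cong \R^k$ of a constant-coefficient equation $\mathrm{tr}(A D^2 v) + b \cdot Dv = 0$ with $A \geq 0$ and $b$ in the range of the relevant operator — precisely the setting in which the classical Liouville theorem for $\cL^{\sigma,b}$ (Theorem \ref{thm:LiouvilleLocal}, applied on $\R^k$ where now $W_{\sigma,b}=\R^k$ by construction) forces $v$ to be constant. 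Hence $u(x,y)=u(y)$ is independent of $x \in W$, which certainly implies $u$ is $W_{\sigma,b}$-periodic.

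The main obstacle — and the point requiring care — is the reduction that makes the local Liouville theorem on $\R^k$ applicable: one must check that after restricting to the subspace $W=W_{\sigma,b}$, the reduced operator genuinely satisfies $W_{\sigma,b}=\R^k$ so that Theorem \ref{thm:LiouvilleLocal}(b) holds for it, and that the degenerate directions of $\sigma\sigma^\texttt{T}$ inside $W$ (those orthogonal to all $\sigma_j$ but along which $b$ has a component) are handled — this is exactly where the drift $b$ does the work, via the elementary fact that a bounded smooth solution of $b \cdot Dv=0$ along a line is constant, iterated/combined with the heat-type smoothing in the nondegenerate directions. A clean way to phrase the whole argument is to invoke Theorem \ref{thm:LiouvilleLocal} as a black box on the subspace $W$ after the linear change of variables, so that no new hard analysis is needed; the only real content is the bookkeeping of which derivatives appear in the equation. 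I would also note that passing between $C^\infty_{\textup{b}}$ and general $L^\infty$ distributional solutions is not needed here, since the proposition is stated for $C^\infty_{\textup{b}}(\R^d)$ precisely so Lemma \ref{lem:smoothReduction} can be used later.
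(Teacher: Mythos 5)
Your direction \eqref{b2} $\Rightarrow$ \eqref{a2} and your block reduction are fine: $\sigma\sigma^{\texttt{T}}$ vanishes on $W_{\sigma,b}^{\perp}$, $b\in W_{\sigma,b}$, so for each frozen $W_{\sigma,b}^{\perp}$-coordinate the restriction of $u$ to $W_{\sigma,b}\cong\R^k$ solves a constant-coefficient equation whose coefficient vectors now span all of $\R^k$. The gap is in what you do next: you invoke Theorem \ref{thm:LiouvilleLocal} ``as a black box'' on $\R^k$. Within this paper that is circular, because Theorem \ref{thm:LiouvilleLocal} is not proved independently here; it is recalled with a loose ``see e.g.\ \cite{Nel61,Miy15}'' and then explicitly obtained as a trivial consequence of Proposition \ref{thm:PeriodLiouvilleLocal} itself. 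Nor do those references deliver the statement you need: \cite{Nel61} is Liouville for harmonic functions and \cite{Miy15} concerns the Laplacian and heat kernels, whereas the genuinely nontrivial case of the full-span Liouville theorem is precisely when $b$ has a component transverse to $\textup{span}\{\sigma_1,\dots,\sigma_P\}$ --- the model being bounded eternal solutions of $u_{x_1x_1}+u_{x_2}=0$ in $\R^2$.

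Your sketched justification for those drift directions (``a bounded smooth solution of $b\cdot Dv=0$ along a line is constant, combined with heat-type smoothing'') does not close this gap: one does not know a priori that $b\cdot Du=0$, only that the combination $\textup{tr}(\sigma\sigma^{\texttt{T}}D^2u)+b\cdot Du$ vanishes, and constancy of bounded eternal caloric functions requires a real argument. The paper supplies it by lifting: set $v(x,y,t)=u(x+\sigma y-bt)$, observe $\Delta_y v-\partial_t v=0$, represent $v$ through the heat kernel by uniqueness of bounded solutions, let $s\to-\infty$ using $\|\Delta_y K_P(\cdot,t-s)\|_{L^1(\R^P)}\to0$ to conclude $\Delta_y v=0$, and then apply the classical Liouville theorem of \cite{Nel61} together with the representation formula to get constancy in $y$ and $t$, i.e.\ constancy of $u$ along $W_{\sigma,b}$. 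If you replaced the black-box appeal by this (or any independent) proof of the full-span case, your reduction to the subspace $W_{\sigma,b}$ would yield a correct and essentially equivalent argument; as written, the hard analytic step is assumed rather than proved.
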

 Note  that part \eqref{b2} implies that $u$ is constant in the
directions defined by the vectors $\sigma_1,\ldots,\sigma_P,b$. If
their span then covers all of $\R^d$, Theorem \ref{thm:LiouvilleLocal} follows trivially.
To prove Proposition \ref{thm:PeriodLiouvilleLocal}, we adapt the ideas of \cite{Miy15} to our setting.

\begin{proof}[Proof of Proposition \ref{thm:PeriodLiouvilleLocal}]\ 

\smallskip

\noindent\eqref{b2} $\Rightarrow$ \eqref{a2} \ We have $b \cdot Du(x)=\frac{\dd}{\dd t} u(x+t b)_{|_{t=0}}=0$ for any $x \in \R^d$ since the function $t \mapsto u(x+t b)$ is constant. Similarly $(\sigma_j \cdot D)^2 u(x):=\frac{\dd^2}{\dd t^2} u(x+t \sigma_j)_{|_{t=0}}=0$ for any $j=1,\dots,P$.
Using then that $\textup{tr}(\sigma\sigma^\texttt{T} D^2u)=
\sum_{j=1}^P
(\sigma_j \cdot
D)^2 u$, we conclude that $\cL^{\sigma,b}[u]=0$ in $\R^d$.  

\medskip

\noindent\eqref{a2} $\Rightarrow$ \eqref{b2} \ Let
$v(x,y,t):=u(x+\sigma y-bt)$  for $x \in \R^d$, $y\in\R^P$, and $t \in \R$. Direct computations show that
$$
\Delta_y v(x,y,t)=\sum_{j=1}^P(\sigma_j\cdot D)^2u(x+\sigma y-bt)=\textup{tr}\big[\sigma \sigma^{\texttt{T}} D^2u(x+\sigma y-bt)\big]
$$
 and $ \partial_tv(x,y,t)=-b \cdot Du(x+\sigma y-bt)$. Hence for all
$(x,y,t)\in \R^d \times \R^P \times \R$,
$$
\Delta_y
v(x,y,t)-\partial_tv(x,y,t)=\mathcal{L}^{\sigma,b}[u](x+\sigma y-bt)=0.
$$

Since $v(x,\cdot,\cdot)$ is
bounded, we conclude by uniqueness of the heat equation that  for any
$s<t$,
\begin{equation}\label{eq:convFormulaHE}
v(x,y,t)=\int_{\R^P}v(x,z,s)K_P(y-z,t-s)\dd z,
\end{equation}
where $K_P$ is the standard heat kernel in $\R^P$. But then
$$
\|\Delta_yv(x,\cdot,t)\|_\infty\leq \|v(x,\cdot,s)\|_\infty\|\Delta_y K_P(\cdot,t-s)\|_{L^1(\R^P)},
$$
and since $\|\Delta_y K_P(\cdot,t-s)\|_{L^1}\to0$ as $s\to-\infty$,
 we deduce that
$\Delta_yv=0$ for all $x,y,t$. 

By the classical
Liouville theorem (see e.g. \cite{Nel61}), $v$ is constant in
$y$.  It is also constant in $t$ by \eqref{eq:convFormulaHE}
since $\int_{\R^P}K_P(z,t-s)\dd z=1$. We  conclude that $u$ is $W_{\sigma,b}$-periodic since 
$$
u(x)=v(x,0,0)=v(x,y,t)=u(x+\sigma y-bt)
$$
 and $W_{\sigma,b}=\{\sigma y-bt:y \in \R^P, t \in \R\}$. 
\end{proof}

\subsection{ $G_\mu$-periodicity for general operators}

Proposition \ref{thm:PeriodLiouvilleLocal} might seem artificial in the
local case, but not so in the nonlocal case. In fact we will prove our general
Liouville result as a consequence of a periodicity result. A key step
in this direction is the  lemma below.

\begin{lemma}\label{lem:L0ImpusuppmuPer}
Assume \eqref{as:sigmab}, \eqref{as:mus}, $\cL$ is given by \eqref{eq:GenOp1}--\eqref{def:localOp1}--\eqref{def:levy1}, and $u\in C^\infty_{\textup{b}}(\R^d)$. If $\cL[u]=0$ in $\R^d$, then $u$ is $\supp(\mu)$-periodic.
\end{lemma}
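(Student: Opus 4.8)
The plan is to reduce the lemma to a one-sided ``propagation of the maximum'' statement and then promote it to genuine periodicity by a telescoping argument that uses only the boundedness of $u$. First I would establish the following \textbf{Claim}: \emph{if $f\in C^\infty_{\textup{b}}(\R^d)$ solves $\cL[f]=0$ in $\R^d$, then $f(\cdot+s)\le f(\cdot)$ on $\R^d$ for every $s\in\supp(\mu)$.} Granting the Claim, I apply it to $f=u$ to get $\psi:=u-u(\cdot+s)\ge0$; since $\psi\in C^\infty_{\textup{b}}(\R^d)$ and $\cL[\psi]=0$ (linearity and translation invariance), applying the Claim to $-\psi$ yields $\psi(\cdot+s)\ge\psi(\cdot)$, hence $\psi(\cdot+ns)\ge\psi(\cdot)$ for all $n\ge0$. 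Then
\[
N\,\psi(x)\ \le\ \sum_{n=0}^{N-1}\psi(x+ns)\ =\ \sum_{n=0}^{N-1}\big(u(x+ns)-u(x+(n+1)s)\big)\ =\ u(x)-u(x+Ns)\ \le\ 2\|u\|_{L^\infty},
\]
and letting $N\to\infty$ forces $\psi\equiv0$, i.e. $u(\cdot+s)=u(\cdot)$. Since $s\in\supp(\mu)$ is arbitrary, $u$ is $\supp(\mu)$-periodic.

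To prove the Claim, fix $s\in\supp(\mu)$, set $W_f:=f(\cdot+s)-f$ and $\delta:=\sup_{\R^d}W_f$, and argue by contradiction assuming $\delta>0$. I would take a maximizing sequence $y_n$ (so $W_f(y_n)\to\delta$) and pass to the blow-down limit: since $f\in C^\infty_{\textup{b}}$, the translates $f_n:=f(\cdot+y_n)$ are bounded in $C^3_{\textup{b}}(\R^d)$, so along a subsequence $f_n\to g$ in $C^2_{\textup{loc}}(\R^d)$ with $g$ bounded. The limit still satisfies $\cL[g]=0$ in $\R^d$: the local part passes to the limit by the $C^2_{\textup{loc}}$ convergence, and the nonlocal part by dominated convergence, the integrand $f_n(x+z)-f_n(x)-z\cdot Df_n(x)\mathbf{1}_{|z|\le1}$ being bounded, uniformly in $n$, by $C\min\{|z|^2,1\}\in L^1(\mu)$ (Taylor for $|z|\le1$, boundedness for $|z|>1$). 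With $W:=g(\cdot+s)-g$ one then has $\cL[W]=0$, $W\le\delta$ on $\R^d$, and $W(0)=\lim_nW_f(y_n)=\delta$, so $W$ attains its maximum $\delta$ at $0$.

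The heart is the propagation step: whenever $W$ attains $\delta$ at a point $x_0$, one has $DW(x_0)=0$ and $D^2W(x_0)\le0$, so evaluating $0=\cL[W](x_0)$ gives
\[
0=\underbrace{\textup{tr}\big(\sigma\sigma^{\texttt{T}}D^2W(x_0)\big)}_{\le\,0}+\underbrace{b\cdot DW(x_0)}_{=\,0}+\int_{\R^d\setminus\{0\}}\underbrace{\big(W(x_0+z)-\delta\big)}_{\le\,0}\dd\mu(z),
\]
and since the three contributions are nonpositive they must all vanish; in particular $W(x_0+z)=\delta$ for $\mu$-a.e. $z$, hence for every $z\in\supp(\mu)$ by continuity of $W$ and the definition of $\supp(\mu)$. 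Starting from $x_0=0$ and iterating along the fixed vector $s\in\supp(\mu)$ gives $W(ns)=\delta$ for all $n\ge0$, whence $g(Ns)-g(0)=\sum_{n=0}^{N-1}W(ns)=N\delta\to+\infty$, contradicting $\|g\|_{L^\infty}\le\|f\|_{L^\infty}$. Therefore $\delta\le0$, which is the Claim.

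The main obstacle is that $u$ need not attain its supremum, so the propagation computation cannot be run on $u$ directly; the blow-down $f(\cdot+y_n)\to g$ is exactly the device that produces a solution whose relevant difference $W$ \emph{does} attain its maximum, after which boundedness does the rest. A secondary delicate point is the passage $\cL[g]=0$: one must keep the nonlocal integrand intact (the compensator $z\cdot Df\,\mathbf{1}_{|z|\le1}$ need not be $\mu$-integrable on its own) and pass to the limit under the single $\min\{|z|^2,1\}$ bound. I expect the step ``$W=\delta$ $\mu$-a.e. $\Rightarrow W=\delta$ on $\supp(\mu)$'' and the extraction of the $C^2_{\textup{loc}}$-convergent subsequence to be routine.
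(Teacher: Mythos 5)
Your proof is correct and follows essentially the same route as the paper: translate along a maximizing sequence of $u(\cdot+s)-u$, use compactness of $C^\infty_{\textup{b}}$ translates to obtain a limit solution whose difference attains its supremum, then apply propagation of the maximum through the nonlocal term and iterate along $s$ to contradict boundedness. The only cosmetic difference is your final telescoping step with $\psi$, which is unnecessary: since your Claim applies to every solution, applying it to both $u$ and $-u$ already gives $u(\cdot+s)=u(\cdot)$, exactly as the paper argues ``$v\le 0$ and similarly $v\ge 0$''.
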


To prove this result, we use propagation of maximum
(see e.g. \cite{CD60,Cov08,Cio12}).
\begin{lemma}\label{max-prop}  If $u\in C^\infty_{\textup{b}}(\R^d)$
achieves its global maximum at some $\bar x$ such that $\cL[u](\bar x)\geq 0$, then $u(\bar x+z)=u(\bar x)$ for any $z \in \textup{supp} (\mu)$. 
  \end{lemma}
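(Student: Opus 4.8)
The plan is to exploit the maximum principle satisfied by $\cL$ together with a strong-maximum-type propagation argument. The statement to prove is that if $u \in C^\infty_{\textup{b}}(\R^d)$ attains its global maximum at $\bar x$ and $\cL[u](\bar x) \geq 0$, then $u \equiv u(\bar x)$ on the translated support $\bar x + \supp(\mu)$. First I would write out $\cL[u](\bar x)$ explicitly using \eqref{eq:GenOp1}--\eqref{def:localOp1}--\eqref{def:levy1}. At a global maximum point $\bar x$ we have $Du(\bar x) = 0$ and $D^2 u(\bar x) \leq 0$, so the local part satisfies $\cL^{\sigma,b}[u](\bar x) = \textup{tr}(\sigma\sigma^{\texttt{T}} D^2 u(\bar x)) \leq 0$. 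Also, since $Du(\bar x)=0$, the corrector term $z \cdot Du(\bar x)\mathbf{1}_{|z|\leq 1}$ vanishes, so
\[
\cL^\mu[u](\bar x) = \int_{\R^d \setminus \{0\}} \big(u(\bar x + z) - u(\bar x)\big) \dd\mu(z),
\]
and every integrand $u(\bar x + z) - u(\bar x)$ is $\leq 0$ because $\bar x$ is a global maximum.

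The key step is then the following dichotomy: from $0 \leq \cL[u](\bar x) = \cL^{\sigma,b}[u](\bar x) + \cL^\mu[u](\bar x)$ with both summands nonpositive, we conclude both are zero; in particular
\[
\int_{\R^d \setminus \{0\}} \big(u(\bar x) - u(\bar x + z)\big) \dd\mu(z) = 0.
\]
Since the integrand is nonnegative and $\mu \geq 0$, it must vanish $\mu$-a.e., i.e. $u(\bar x + z) = u(\bar x)$ for $\mu$-almost every $z$. To upgrade this to \emph{all} $z \in \supp(\mu)$, I would use continuity of $u$: the set $\{z : u(\bar x + z) = u(\bar x)\}$ is closed, and it contains a set of full $\mu$-measure, hence it contains $\supp(\mu)$ (by definition \eqref{def-support}, any point of $\supp(\mu)$ is a limit of points in every neighborhood of which $\mu$ is positive, so in particular a limit of points where the equality holds). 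This gives $u(\bar x + z) = u(\bar x)$ for all $z \in \supp(\mu)$, which is exactly the claim.

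I expect the main (though still mild) obstacle to be the measure-theoretic cleanup: arguing cleanly that an a.e.-equality with respect to $\mu$ combined with continuity of $u$ yields the equality on all of $\supp(\mu)$. The point is simply that $\{z \ne 0 : u(\bar x+z) = u(\bar x)\}$ is relatively closed in $\R^d \setminus \{0\}$ and has zero $\mu$-measure complement, while $\supp(\mu)$ is by definition the smallest closed set (in $\R^d\setminus\{0\}$) carrying all the mass; hence $\supp(\mu)$ is contained in that closed full-measure set. One should also note that the integrability assumption \eqref{as:mus} together with boundedness of $u$ and $Du$ near $0$ guarantees all the integrals above converge absolutely, so the splitting and sign arguments are legitimate. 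No compactness or regularity beyond $u \in C^\infty_{\textup{b}}$ is needed, and the proof is genuinely short.
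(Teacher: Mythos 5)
Your proof is correct and follows essentially the same route as the paper's: sign of the local part and the vanishing gradient at the global maximum, nonpositivity of the integrand forcing $u(\bar x+z)=u(\bar x)$ $\mu$-a.e., then continuity of $u$ and the definition \eqref{def-support} of $\supp(\mu)$ to upgrade to all $z\in\supp(\mu)$. No gaps; the measure-theoretic cleanup you describe is exactly the paper's footnoted argument.
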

\begin{proof}
  At $\bar x$, $u=\sup u$, $Du=0$ and $D^2u\leq 0$, and hence
  $\cL^{\sigma,b}[u](\bar x)\leq 0$ and
  $$0 \leq\cL[u](\bar x) \leq \cL^{\mu}[u](\bar x) =  \int_{\R^d \setminus \{0\}} \big(u(\bar
  x+z) - \sup_{\R^d} u\big)\, d\mu(z).$$
 Using that $\int_{\R^d\setminus\{0\}} f \dd \mu \geq 0$ and $f \leq 0$ implies $f=0$ $\mu$-a.e., we deduce that $u(\bar x+z)-\sup_{\R^d} u=0$ for $\mu$-a.e. $z$. Since $u$ is continuous, this equality holds for all $z \in \textup{supp} (\mu)$.\footnote{ If not, we would find some $z_0$ and $r_0>0$ such that $f(z):=u(\bar x+z)-\sup u<0$ in $B_{r_0}(z_0)$ where as $\mu(B_{r_0}(z_0))>0$ by \eqref{def-support}.} 
\end{proof}

To exploit Lemma \ref{max-prop}, we need to have a maximum point. 
For this sake, we use a localization technique \`a la \cite{CD60,BeHaRo07,Ros09,BrCoHaVa19}. 

\begin{proof}[Proof of  Lemma \ref{lem:L0ImpusuppmuPer}]
 Fix an arbitrary $ \bar z  \in \supp(\mu)$, define
\[
v(x):=u(x+ \bar z )-u(x),
\]
and let us show that  $v(x)=0$  for all $x \in \R^d$. 
We first
 show that $v \leq 0$. Take $M$ and a sequence $\{x_n\}_{n}$ such that 
\[
v(x_n)\stackrel{n\to \infty}{\longrightarrow} M:=\sup v,
\]
and define
$$u_n(x):=u(x+x_n)\quad\text{and}\quad v_n(x):=v(x+x_n).$$  Note that
$\cL[v_n] = 0$ in $\R^d$. Now since $v \in  C_\textup{b}^\infty(\R^d)$, the
Arzel\`a-Ascoli theorem implies that there exists $v_\infty$ such that
$v_n \to v_\infty$ locally uniformly (up to a subsequence). Taking
another subsequence if necessary, we can assume that the derivatives
up to second order converge and pass to the limit in the equation
$\cL[v_n] = 0$ to deduce that $\cL[v_\infty] = 0$ in $\R^d$.
Moreover, $v_\infty$ attains its maximum at $x=0$ since $v_\infty\leq
M$ and
\[
v_\infty(0)=\lim_{n\to\infty}v_n(0)=\lim_{n\to \infty}v(x_n)=M.
\]
A similar argument shows that there is a $u_\infty$ such that $u_n \to
u_\infty$ as $n \to \infty$ locally uniformly. Taking further subsequences
if necessary, we can assume that  $u_n$ and $v_n$ converge along
the same sequence.  Then by construction
$$
v_\infty(x)=u_\infty(x+ \bar z )-u_\infty(x).
$$
By Lemma \ref{max-prop} and an iteration, we find that
$M=v_\infty(m \bar z )=u_\infty((m+1) \bar z )-u_\infty(m \bar z )$ for any $m \in
\Z$. Then by another
iteration,
\[
u_\infty((m+1) \bar z )=u_\infty(m \bar z )+M=\ldots = u_\infty(0)+(m+1)M.
\]
But since $u_\infty$ is bounded, the only choice is $M=0$ and thus $v\leq M=0$.  A similar argument shows that $v \geq 0$, and hence, $0=v(x)=u(x+ \bar z )-u(x)$ for any $ \bar z \in \supp(\mu)$ and all $x\in\R^d$.
\end{proof}

We can give a more general result than Lemma \ref{lem:L0ImpusuppmuPer} if we consider groups.

\begin{definition}
\begin{enumerate}[{\rm (a)}]
\item A set
$G \subseteq \R^d$ is an {\it additive subgroup} if $G \neq \emptyset$ and 
$$
\forall g_1,g_2 \in G, \quad g_1+g_2\in G \quad \text{and}\quad-g_1 \in G. 
$$
\item The \textit{subgroup generated} by a set $S \subseteq \R^d$, denoted $G(S)$, is the smallest additive group containing $S$.
\end{enumerate}
\end{definition}

Now we return to
a key set for our  analysis: 
\begin{equation}\label{def-Gmu}
G_\mu=\overline{G(\supp (\mu))}.
\end{equation}
This set appears naturally because  of the elementary result below. 

\begin{lemma}\label{suppmuPerGsuppmuPer}
Let $S \subseteq \R^d$. Then $w\in C(\R^d)$ is $S$-periodic if and only if $w$ is  $\overline{G(S)}$-periodic. 
\end{lemma}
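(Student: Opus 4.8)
The plan is to prove the two directions separately, with the nontrivial content lying in showing that $S$-periodicity propagates to the closed group $\overline{G(S)}$. The direction $\Leftarrow$ is immediate: since $S \subseteq G(S) \subseteq \overline{G(S)}$, any $\overline{G(S)}$-periodic function is in particular $S$-periodic. For the direction $\Rightarrow$, I would first observe that the set of all periods of a given continuous function $w$,
$$
\mathrm{Per}(w) := \{\, p \in \R^d \ : \ w(\cdot + p) = w(\cdot) \,\},
$$
is itself an additive subgroup of $\R^d$: if $w(\cdot+p)=w$ and $w(\cdot+q)=w$ then $w(\cdot+p+q)=w(\cdot+q)=w$, and applying the shift by $-p$ to $w(\cdot+p)=w$ gives $w(\cdot-p)=w$. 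Hence if $S \subseteq \mathrm{Per}(w)$, then $G(S) \subseteq \mathrm{Per}(w)$ by minimality of the generated subgroup, so $w$ is $G(S)$-periodic.

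The remaining point is to upgrade $G(S)$-periodicity to $\overline{G(S)}$-periodicity, i.e. to show $\mathrm{Per}(w)$ is closed when $w$ is continuous. I would take a sequence $p_n \in \mathrm{Per}(w)$ with $p_n \to p$; then for every fixed $x \in \R^d$, continuity of $w$ gives $w(x+p) = \lim_n w(x+p_n) = \lim_n w(x) = w(x)$, so $p \in \mathrm{Per}(w)$. Combining the three observations: $S \subseteq \mathrm{Per}(w)$ $\Rightarrow$ $G(S) \subseteq \mathrm{Per}(w)$ $\Rightarrow$ $\overline{G(S)} \subseteq \overline{\mathrm{Per}(w)} = \mathrm{Per}(w)$, which is exactly $\overline{G(S)}$-periodicity of $w$.

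There is no real obstacle here — the only place continuity is genuinely used is in the closure step, and the only place the group structure is used is in passing from $S$ to $G(S)$. If one wanted to be careful about a subtlety, it would be to note that the argument does not require $w$ to be bounded or uniformly continuous; pointwise continuity at each $x$ suffices because the limit $p_n \to p$ is taken with $x$ fixed. I would present the proof in essentially this order: (i) $\Leftarrow$ trivial; (ii) $\mathrm{Per}(w)$ is a subgroup, hence contains $G(S)$; (iii) $\mathrm{Per}(w)$ is closed by continuity, hence contains $\overline{G(S)}$.
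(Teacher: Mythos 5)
Your proof is correct and follows essentially the same route as the paper: both arguments show that the set of periods of $w$ is a closed additive subgroup of $\R^d$ (closed by continuity, a subgroup by the shift computation), hence contains $\overline{G(S)}$ as soon as it contains $S$, with the converse direction being trivial. The only difference is presentational — you verify closure under sum and inverse separately, while the paper checks $g_1-g_2$ in one line.
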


\begin{proof}
 It suffices to show that $G:=\{g \in \R^d:w(\cdot+g)=w(\cdot)\}$ is a closed subgroup of $\R^d$. It is obvious that it is closed by continuity of $w$. Moreover, for any $g_1,g_2 \in \R^d$ and $x \in \R^d$,
\begin{equation*}
w(x+g_1-g_2)=w(x-g_2)=w(x-g_2+g_2)=w(x).\qedhere
\end{equation*}
\end{proof}

By Lemmas 
\ref{lem:L0ImpusuppmuPer} and \ref{suppmuPerGsuppmuPer}, we have proved that:

\begin{proposition}[$G_\mu$-periodicity]\label{lem:L0ImpusuppmuPer-bis}
Assume \eqref{as:sigmab}, \eqref{as:mus}, $\cL$ is given by \eqref{eq:GenOp1}--\eqref{def:localOp1}--\eqref{def:levy1}, and $G_\mu$ by \eqref{def-Gmu}. Then any solution $u\in C^\infty_{\textup{b}}(\R^d)$ of $\cL[u]=0$ in $\R^d$ is $G_\mu$-periodic.
\end{proposition}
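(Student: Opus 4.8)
\textbf{Proof proposal for Proposition \ref{lem:L0ImpusuppmuPer-bis}.}
The plan is to simply \emph{combine the two preceding lemmas}, which have already done all of the substantive work; nothing new is needed. First, I would let $u \in C^\infty_{\textup{b}}(\R^d)$ be an arbitrary solution of $\cL[u]=0$ in $\R^d$. By Lemma \ref{lem:L0ImpusuppmuPer}, such a $u$ is $\supp(\mu)$-periodic, i.e. $u(\cdot + s) = u(\cdot)$ for every $s \in \supp(\mu)$. Since $u \in C^\infty_{\textup{b}}(\R^d) \subseteq C(\R^d)$, I would then apply Lemma \ref{suppmuPerGsuppmuPer} with $S = \supp(\mu)$ and $w = u$: this lemma says precisely that a continuous function is $S$-periodic if and only if it is $\overline{G(S)}$-periodic. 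Therefore $u$ is $\overline{G(\supp(\mu))}$-periodic, which by the definition \eqref{def-Gmu} is exactly $G_\mu$-periodicity. This completes the proof.

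There is no real obstacle here, since Proposition \ref{lem:L0ImpusuppmuPer-bis} is essentially a packaging statement: the two genuinely nontrivial inputs are (i) propagation of maximum plus the localization/compactness argument in the proof of Lemma \ref{lem:L0ImpusuppmuPer}, which produces $\supp(\mu)$-periodicity, and (ii) the elementary observation in Lemma \ref{suppmuPerGsuppmuPer} that the set of periods of a continuous function is automatically a closed subgroup, so that periodicity over a set $S$ upgrades for free to periodicity over the closed group it generates. The only point worth a word of care is that $C^\infty_{\textup{b}}$ solutions are exactly the regularity class in which both lemmas are stated, so the hypotheses match verbatim and no extra approximation (e.g. via Lemma \ref{lem:smoothReduction}) is required at this stage; that mollification step is deferred to the proofs of Theorems \ref{thm:Liouville} and \ref{thm:PeriodGeneralOp}, where one passes from distributional $L^\infty$ solutions to smooth ones. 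Hence the proof is a one-line citation of Lemmas \ref{lem:L0ImpusuppmuPer} and \ref{suppmuPerGsuppmuPer}, as the text preceding the statement already indicates.
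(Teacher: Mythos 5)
Your proposal is correct and is exactly the paper's argument: the proposition is stated there as an immediate consequence of Lemmas \ref{lem:L0ImpusuppmuPer} and \ref{suppmuPerGsuppmuPer}, with no further proof given. Your remark that no mollification is needed at this stage (it is deferred to the proofs of the main theorems) also matches the paper's structure.
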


\subsection{The role of $c_\mu$}

 Propositions \ref{thm:PeriodLiouvilleLocal} and
 \ref{lem:L0ImpusuppmuPer-bis} combined may seem to imply that $\cL[u]=0$ gives
 $(G_\mu+W_{\sigma,b})$-periodicity of $u$, but this is not true in
 general. The correct periodicity result depends on a new
 drift $b+c_\mu$, where $c_\mu$ is defined in \eqref{cmu}
 below. To give this definition, we need to decompose 
 $G_\mu$ into 
 a direct sum of a vector subspace and a relative lattice. 

\begin{definition}
\begin{enumerate}[{\rm (a)}]
\item If two subgroups $G,\tilde{G} \subseteq \R^d$ satisfy $G \cap \tilde{G}=\{0\}$, their sum is said to be {\em direct} and we write  $G+\tilde{G}=G \oplus \tilde{G}$.
\smallskip
\item A  \textit{full lattice}  is a subgroup $\Lambda \subseteq \R^d$ of the form 
$
\Lambda=
\oplus_{n=1}^d a_n \Z
$
for some basis $\{a_1,\dots,a_d\}$ of $\R^d$.
A \textit{relative lattice} is a lattice of a vector subspace of $\R^d$.
\end{enumerate}
\end{definition}

\begin{theorem}[Theorem 1.1.2 in \cite{Mar03}]\label{decom:opt}
 If $G$  is a closed subgroup of $\R^d$, then
  $
G=V \oplus \Lambda
$
for some vector space $V \subseteq \R^d$ and some relative lattice $\Lambda  \subseteq \R^d$ such that $V \cap \textup{span}_\R \Lambda=\{0\}$.
\end{theorem}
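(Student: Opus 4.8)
The plan is to split $G$ into its maximal linear part and a discrete remainder. Set
\[
V:=\{g\in\R^d : tg\in G \text{ for all } t\in\R\},
\]
which is exactly the subspace $V_G$ from the introduction (for a closed subgroup $\overline{G(G)}=G$). Because $G$ is closed under addition and inversion, $V$ is closed under addition and real scalar multiplication, hence is a vector subspace; and it is the largest vector subspace contained in $G$, since any subspace $U\subseteq G$ satisfies $\R g\subseteq U\subseteq G$ for every $g\in U$, so $U\subseteq V$. Now fix any algebraic complement $W$ of $V$, so $\R^d=V\oplus W$ as vector spaces, and put $H:=G\cap W$, a closed subgroup of $\R^d$ contained in $W$.

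First I would dispatch the purely algebraic bookkeeping: $G=V\oplus H$. Indeed $V\subseteq G$, and for $g\in G$ the decomposition $g=v+w$ with $v\in V\subseteq G$ and $w\in W$ gives $w=g-v\in G\cap W=H$, so $g\in V+H$; the sum is direct since $V\cap H\subseteq V\cap W=\{0\}$. Moreover $\textup{span}_{\R}H\subseteq W$, so $V\cap\textup{span}_{\R}H=\{0\}$, which is precisely the transversality condition in the statement. Thus it remains only to show that $H$ is a relative lattice, i.e. $H=\bigoplus_{n=1}^m a_n\Z$ for some linearly independent $a_1,\dots,a_m$, since then $H$ is a full lattice of $\textup{span}_{\R}\{a_1,\dots,a_m\}$.

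The heart of the matter is that $H$ is \emph{discrete}. Suppose not; since $H$ is a subgroup, failure of discreteness means $0$ is not isolated in $H$, so we may pick $h_n\in H\setminus\{0\}$ with $h_n\to0$, and after extracting a subsequence $h_n/|h_n|\to e$ for some unit vector $e$. Fix $t\in\R$ and set $k_n:=\lfloor t/|h_n|\rfloor\in\Z$; then $k_n h_n\in H$ and $k_n h_n=(k_n|h_n|)\,(h_n/|h_n|)$ converges to $te$, because $k_n|h_n|\to t$ (it differs from $t$ by at most $|h_n|\to0$) and $h_n/|h_n|\to e$. Since $H$ is closed, $te\in H\subseteq G$ for every $t\in\R$, so $e\in V$; but $e\in W\setminus\{0\}$ and $V\cap W=\{0\}$, a contradiction. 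This step is where the maximality of $V$ is genuinely used: without it the limiting direction $e$ need not already lie in $G$. I expect this discreteness argument to be the main conceptual point of the proof.

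Finally, I would show that a discrete subgroup $H$ of $\R^d$ is a relative lattice, by induction on $m:=\dim\textup{span}_{\R}H$. Discreteness provides a nonzero $a_1\in H$ of minimal norm (the nonzero elements of $H$ in a fixed bounded ball form a finite set), and a division-with-remainder argument yields $H\cap\R a_1=\Z a_1$. Projecting $H$ along $\R a_1$ onto a complement of $\R a_1$ inside $\textup{span}_{\R}H$, one checks the image is again a discrete subgroup, now of dimension $m-1$; by the inductive hypothesis it equals $\bigoplus_{n=2}^m\bar a_n\Z$, and lifting each $\bar a_n$ to some $a_n\in H$ gives $H=\bigoplus_{n=1}^m a_n\Z$ with $\{a_1,\dots,a_m\}$ linearly independent and spanning $\textup{span}_{\R}H$. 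Combined with $G=V\oplus H$ this is the asserted decomposition. The only delicate point in this last part is verifying that the projected subgroup stays discrete, which one does by normalizing representatives to have $\R a_1$-component in $[0,1)$ and invoking finiteness of $H$ in bounded sets.
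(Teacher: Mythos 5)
Your argument is correct. Note that the paper does not prove this statement at all: it is quoted as Theorem~1.1.2 of the cited reference \cite{Mar03}, so there is no in-paper proof to compare against. What you wrote is essentially the classical proof found in that reference (and in Bourbaki-style treatments): take $V$ to be the maximal linear subspace contained in $G$, split off an algebraic complement $W$, show $H=G\cap W$ is discrete, and use the standard induction showing that a discrete subgroup of $\R^d$ is generated by linearly independent vectors, hence is a full lattice of its span. The two delicate points are handled properly: the discreteness step is exactly where maximality of $V$ and closedness of $G$ (hence of $H=G\cap W$) are used, via the limit $te\in H$ forcing $e\in V\cap W\setminus\{0\}$; and the induction step's discreteness of the projected group is justified by the normalization of the $\R a_1$-component to $[0,1)$ together with finiteness of $H$ in bounded sets. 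The resulting $\Lambda=H$ satisfies $\textup{span}_\R\Lambda\subseteq W$, giving the transversality condition $V\cap\textup{span}_\R\Lambda=\{0\}$ exactly as stated, so the proposal is a complete and self-contained substitute for the citation.
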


In this decomposition the space $V$ is unique and can be
represented by \eqref{def-V} below.

\begin{lemma}\label{dis}
Let $V$ be a vector subspace and $\Lambda$ a relative lattice of $\R^d$ such that $V \cap\textup{span}_\R \Lambda =\{0\}$. Then for any $\lambda \in \Lambda$, there is an open ball $B$ of $\R^d$ containing $\lambda$ such that $B \cap (V \oplus \Lambda)=B \cap (V+\lambda)$.
\end{lemma}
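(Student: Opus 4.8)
\textbf{Proof plan for Lemma \ref{dis}.}

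The plan is to exploit the hypothesis $V \cap \textup{span}_\R \Lambda = \{0\}$ to separate the $\Lambda$-component from everything else near the point $\lambda$. First I would observe that it suffices to treat the case $\lambda = 0$: the map $x \mapsto x - \lambda$ is a homeomorphism of $\R^d$ carrying $V \oplus \Lambda$ to itself (since $\lambda \in \Lambda$) and carrying $V + \lambda$ to $V$, so a ball $B_0$ around $0$ with $B_0 \cap (V \oplus \Lambda) = B_0 \cap V$ translates to the desired ball $B = B_0 + \lambda$ around $\lambda$. So the goal reduces to finding $r > 0$ with $B_r(0) \cap (V \oplus \Lambda) = B_r(0) \cap V$; the inclusion $\supseteq$ is automatic since $V \subseteq V \oplus \Lambda$, so only $\subseteq$ needs proof.

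Next I would use the lattice structure. Write $\Lambda = \oplus_{n=1}^k a_n \Z$ for a basis $\{a_1,\dots,a_k\}$ of $U := \textup{span}_\R \Lambda$, and note that since $V \cap U = \{0\}$, the sum $V + U$ is direct, so every element of $V \oplus \Lambda$ has a unique expression $v + \sum_{n=1}^k m_n a_n$ with $v \in V$, $m_n \in \Z$. I want to rule out, for $x \in B_r(0) \cap (V \oplus \Lambda)$ with $r$ small, any nonzero $\Lambda$-component. Consider the projection $\pi \colon V \oplus U \to U$ along $V$ (a bounded linear map, since $V$ and $U$ are complementary subspaces); then $\pi$ restricted to $\Lambda$ is injective, so $\Lambda \setminus \{0\}$ maps to $\pi(\Lambda) \setminus \{0\}$. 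Crucially $\pi(\Lambda)$ is itself a full lattice in $U$ (it equals $\oplus a_n \Z$ again, as $\pi$ fixes each $a_n$), hence a discrete subset of $U$; so there is $\delta > 0$ with $\pi(\lambda') \notin B_\delta(0)$ in $U$ for every $\lambda' \in \Lambda \setminus \{0\}$. Now if $x = v + \lambda'$ with $v \in V$, $\lambda' \in \Lambda$, then $\pi(x) = \lambda'$ — wait, more carefully $\pi(x) = \pi(\lambda') = \lambda'$ since $\lambda' \in U$ and $\pi$ is the identity on $U$; so $|x| \ge \|\pi\|^{-1}|\pi(x)| = \|\pi\|^{-1}|\lambda'|$ (using boundedness of $\pi$ the other way, or just: if $\lambda' \ne 0$ then $x$ is bounded away from $0$). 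Choosing $r < \|\pi\|^{-1}\delta$ (or simply $r$ smaller than the distance from $0$ to the closed set $(V + (\Lambda\setminus\{0\}))$, which is positive by the discreteness argument) forces $\lambda' = 0$, i.e. $x \in V$, which is exactly $\subseteq$.

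The only point requiring a little care — and hence the main obstacle — is justifying that $V + (\Lambda \setminus \{0\})$ stays a positive distance from the origin; this is where $V \cap U = \{0\}$ is genuinely used, via the fact that the oblique projection $\pi$ onto $U$ along $V$ is a well-defined bounded operator and sends the discrete set $\Lambda$ to a discrete set not accumulating at $0$ except through $0$ itself. Everything else is a routine translation/reduction. I would present it in the order: reduce to $\lambda = 0$; set up $\pi$ and note $\pi|_\Lambda$ injective with discrete image; conclude $\textup{dist}(0, V + (\Lambda \setminus \{0\})) =: r_0 > 0$; take any $r < r_0$ and verify $B_r(0) \cap (V \oplus \Lambda) \subseteq V$; translate back.
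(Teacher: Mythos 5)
Your argument is correct and rests on the same two ingredients as the paper's own proof: the continuity (boundedness) of the projection from $V \oplus \textup{span}_\R \Lambda$ onto $\textup{span}_\R \Lambda$ along $V$, which is where $V \cap \textup{span}_\R \Lambda = \{0\}$ enters, and the fact that each point of $\Lambda$ is isolated. The paper phrases this as a sequential contradiction at the given $\lambda$, while you translate to $\lambda = 0$ and make the bound quantitative via $\|\pi\|$ and the minimal distance $\delta$, but this is essentially the same proof.
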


\begin{proof}
If the lemma does not hold, there exists $v_n+\lambda_n \to  \lambda$
as $n \to \infty$ where $v_n \in V$, $\lambda_n \in \Lambda$,
$\lambda_n \neq \lambda$. Note that $v_n, \lambda_n,
\lambda\in V \oplus \textup{span}_\R \Lambda$, and that
$$
\lambda=\!\!\underset{\ \, \in V}0+\!\!\underset{\ \, \in \Lambda}{\lambda}.
$$ 
By continuity of the projection from $V \oplus \textup{span}_\R\Lambda$ onto $\textup{span}_\R \Lambda$, $\lambda_n \to \lambda$ and this contradicts the fact that  each point of $\Lambda$ is isolated.
\end{proof}

\begin{lemma}\label{pro:def-V}
Let $G$, $V$ and $\Lambda$ be as in Theorem \ref{decom:opt}. Then
\begin{equation}
\label{def-V}
V=V_G:=\left\{g \in G \ :\ t g \in G \mbox{ } \forall t \in \R  \right\}.
\end{equation}
\end{lemma}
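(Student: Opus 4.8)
The plan is to prove the two inclusions $V \subseteq V_G$ and $V_G \subseteq V$ separately. The first is immediate: if $g \in V$, then since $V$ is a vector subspace of $\R^d$ contained in $G$, we have $tg \in V \subseteq G$ for every $t \in \R$, so $g \in V_G$ by the defining property \eqref{def-V}.

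For the reverse inclusion, fix $g \in V_G$ and use the decomposition of Theorem \ref{decom:opt} to write $g = v + \lambda$ with $v \in V$ and $\lambda \in \Lambda$; the goal is to show $\lambda = 0$. Since $g \in V_G$, for every $t \in \R$ we have $tg \in G$, hence $tg = v_t + \lambda_t$ for some $v_t \in V$ and $\lambda_t \in \Lambda$. Subtracting the two expressions for $tg$ gives $tv - v_t = \lambda_t - t\lambda$. The left-hand side lies in $V$ and the right-hand side lies in $\textup{span}_\R \Lambda$ (because $\lambda_t \in \Lambda$ and $t\lambda \in \textup{span}_\R\Lambda$), so by $V \cap \textup{span}_\R \Lambda = \{0\}$ both sides vanish. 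In particular $t\lambda = \lambda_t \in \Lambda$ for every $t \in \R$, i.e. the whole line $\R\lambda$ is contained in $\Lambda$.

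Now I invoke that $\Lambda$ is a relative lattice with $V \cap \textup{span}_\R\Lambda = \{0\}$, so each of its points is isolated — this is exactly the property exploited in Lemma \ref{dis}, and it is part of the definition of a lattice of a vector subspace. A subset of $\textup{span}_\R\Lambda$ all of whose points are isolated cannot contain a nontrivial line through the origin; hence $\lambda = 0$ and $g = v \in V$. This gives $V_G \subseteq V$, and together with the first inclusion, $V = V_G$. The only genuine subtlety is this final step, namely knowing that a relative lattice is discrete in its span so that it contains no line except $\{0\}$; everything else is bookkeeping with the direct sum $G = V \oplus \Lambda$.
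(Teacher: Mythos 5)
Your proof is correct, but it follows a slightly different route than the paper's. After writing $g=v+\lambda$, the paper shows only that $t\lambda\in G$ for every $t$ and then invokes Lemma \ref{dis} at the point $\lambda$: choosing $t\neq 1$ with $t\lambda$ in the ball $B$ satisfying $B\cap G=B\cap(V+\lambda)$ gives $t\lambda=\tilde v+\lambda$, hence $\lambda=(t-1)^{-1}\tilde v\in V\cap\operatorname{span}_\R\Lambda=\{0\}$. You instead use the uniqueness of the decomposition in $G=V\oplus\Lambda$ (via the transversality $V\cap\operatorname{span}_\R\Lambda=\{0\}$) to show the stronger statement that the whole line $\R\lambda$ sits inside $\Lambda$, and then conclude $\lambda=0$ from discreteness of the lattice at the origin. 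Both arguments ultimately rest on the same fact, that each point of $\Lambda$ is isolated (the paper uses it inside the proof of Lemma \ref{dis}, you use it directly), so your version is self-contained and bypasses Lemma \ref{dis}, whereas the paper economizes by reusing a lemma it needs anyway for Proposition \ref{def-prop}. One small inaccuracy in your wording: the isolation of points of $\Lambda$ has nothing to do with the condition $V\cap\operatorname{span}_\R\Lambda=\{0\}$; it follows solely from $\Lambda=\oplus_{n}a_n\Z$ over a basis of its span, so you should not bundle the two hypotheses together when citing it. This does not affect the validity of the argument.
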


\begin{proof}
It is clear that $V \subseteq V_G$. Now given $g \in V_G$, there is
$(v,\lambda) \in V \times \Lambda$ such that
$g=v+\lambda$. For any  $t \in \R$, $t g=t v+t \lambda \in G$ and thus
$t \lambda \in G$ since $t v \in V \subseteq G$. Let $B$ be an open
ball containing $\lambda$ such that $B \cap G=B \cap
(V+\lambda)$. Choosing $ t$ such that $t \neq 1$ and $t \lambda \in
B$, we infer that $t \lambda={ \tilde{v} }+\lambda$ for some ${
  \tilde{v} } \in V$. Hence $\lambda=(t-1)^{-1} { \tilde{v} } \in V$
and this implies that $\lambda=0$.
In other words $V_G \subseteq V$, and the proof is complete.
\end{proof}

\begin{remark}\label{rem:per} 
Any $G$-periodic function $w \in C^1(\R^d)$ is such that $z \cdot Dw (x)=\lim_{t \to 0} \frac{w(x+t { z })-w(x)}{t}=0$ for any $x \in \R^d$   and $z \in V_G$.
 \end{remark}

By Theorem \ref{decom:opt} and Lemma
\ref{pro:def-V}, we decompose the set $G_\mu$ in
\eqref{def-Gmu} into a lattice and the subspace
$V_\mu:=V_{G_\mu}$. The new drift can then be defined as
\begin{equation}\label{cmu}
c_\mu=-\int_{\{|z| \leq 1\} \setminus V_\mu} z\, \dd \mu(z).
\end{equation}

\begin{proposition}\label{def-prop}
 Assume \eqref{as:mus} and $c_\mu$ is given by
 \eqref{cmu}. Then $c_\mu \in \R^d$ is well-defined
 and uniquely determined by $\mu$.
\end{proposition}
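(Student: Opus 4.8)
The plan is to show two things: first, that the integral in \eqref{cmu} is absolutely convergent (so $c_\mu$ is well-defined as an element of $\R^d$), and second, that its value does not depend on any choices implicitly made — here the only potential ambiguity is in the decomposition $G_\mu = V_\mu \oplus \Lambda$, but since $V_\mu = V_{G_\mu}$ is uniquely determined by $G_\mu$ via \eqref{def-V} (Lemma \ref{pro:def-V}), and $G_\mu$ is uniquely determined by $\mu$ via \eqref{def-Gmu}, the set $\{|z|\le 1\}\setminus V_\mu$ depends only on $\mu$. So the real content is the convergence.

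For convergence, the key observation is that the singularity of $\mu$ at the origin is confined to $V_\mu$ in a quantitative sense. First I would note that on the region $\{|z|\le 1\}\setminus V_\mu$ we have $|z| \le |z|^2/\operatorname{dist}(z, V_\mu)$ is not quite right; instead the clean statement is that there is a constant $\delta>0$ such that $\operatorname{dist}(z, V_\mu)\ge\delta$ for all $z\in\supp(\mu)$ with $0<|z|\le 1$ and $z\notin V_\mu$ — wait, that is false in general. Let me reconsider: the correct mechanism is that $\supp(\mu)\subseteq G_\mu = V_\mu\oplus\Lambda$, and by Lemma \ref{dis} applied to $\lambda = 0\in\Lambda$, there is an open ball $B = B_r(0)$ with $B\cap G_\mu = B\cap V_\mu$. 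Hence for $z\in\supp(\mu)$ with $|z|<r$ we automatically have $z\in V_\mu$. Therefore the region of integration $\{|z|\le 1\}\setminus V_\mu$ meets $\supp(\mu)$ only where $|z|\ge r$, so
\[
\int_{\{|z|\le1\}\setminus V_\mu}|z|\dd\mu(z) \le \int_{\{r\le|z|\le1\}}|z|\dd\mu(z) \le \frac1r\int_{\{r\le|z|\le1\}}|z|^2\dd\mu(z) < \infty
\]
by \eqref{as:mus}. This proves absolute convergence, hence $c_\mu\in\R^d$ is well-defined.

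For uniqueness, I would spell out the dependency chain: $\mu$ determines $\supp(\mu)$ by \eqref{def-support}, hence $G_\mu = \overline{G(\supp(\mu))}$ by \eqref{def-Gmu}, hence $V_\mu = V_{G_\mu}$ by the intrinsic formula \eqref{def-V} of Lemma \ref{pro:def-V} (which shows $V$ in Theorem \ref{decom:opt} is canonically $V_{G_\mu}$ and in particular independent of the choice of complementary lattice $\Lambda$). Since both $\mu$ and the set $\{|z|\le1\}\setminus V_\mu$ are thus determined by $\mu$ alone, and the integrand $z\mapsto z$ is fixed, the value $c_\mu$ is uniquely determined by $\mu$.

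The main obstacle is getting the quantitative confinement of the singularity correct — naively one might try to bound $|z|$ by $|z|^2$ times a constant on the whole region $\{|z|\le1\}\setminus V_\mu$, which fails because points of $\supp(\mu)$ outside $V_\mu$ can still be close to $V_\mu$ (just not close to $0$). The right move is to invoke Lemma \ref{dis} with $\lambda=0$ to produce a genuine ball around the origin inside which $\supp(\mu)$ lies in $V_\mu$, thereby bounding $|z|$ away from zero on the relevant part of the support; after that the estimate via \eqref{as:mus} is immediate.
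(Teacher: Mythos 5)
Your proof is correct and follows essentially the same route as the paper: the key step in both is to apply Lemma \ref{dis} at $\lambda=0$ to obtain a ball $B_r(0)$ in which $G_\mu$ (hence $\supp(\mu)$) coincides with $V_\mu$, so the integration region avoids the singularity and \eqref{as:mus} gives convergence. Your explicit discussion of uniqueness via Lemma \ref{pro:def-V} simply spells out what the paper leaves implicit, and your remark about the failed naive bound is a fair account of why the lemma is needed.
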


\begin{proof}
Using that $\textup{supp}(\mu) \subset G_\mu=V_\mu\oplus \Lambda$, 
\begin{equation*}
\begin{split}
\int_{\{|z| \leq 1\} \setminus V_\mu} |z| \dd \mu(z) & = \int_{G_\mu \setminus (V_\mu+0)} |z| \mathbf{1}_{|z| \leq 1} \dd \mu(z)\\
& \leq \int_{G_\mu \setminus B} |z| \mathbf{1}_{|z| \leq 1} \dd \mu(z)
\end{split}
\end{equation*}
for some open ball $B$ containing $0$ given by Lemma \ref{dis}. This integral is finite by \eqref{as:mus} which completes the proof.
\end{proof}

\begin{proposition}\label{lem:drift}
Assume \eqref{as:mus} and $\cL^\mu$,   $G_\mu$,  $c_\mu$ are given by
\eqref{def:levy1},  \eqref{def-Gmu},  \eqref{cmu}. If $w\in C^\infty_{\textup{b}}(\R^d)$
is 
$G_\mu$-periodic, 
then
  $$\cL^\mu [w] = c_\mu\cdot Dw \quad \text{in}\quad \R^d.$$
 \end{proposition}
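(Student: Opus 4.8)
The plan is to split the integral defining $\cL^\mu[w]$ according to whether the jump vector $z$ lies in $V_\mu$ or not, and to show that the $V_\mu$-part contributes nothing while the complementary part reproduces exactly the drift $c_\mu \cdot Dw$. Write
\[
\cL^\mu[w](x)=\int_{\{|z|\le 1\}\cap V_\mu}\!\!\!\big(w(x+z)-w(x)-z\cdot Dw(x)\big)\dd\mu(z)+\int_{\R^d\setminus(\{|z|\le 1\}\cap V_\mu)}\!\!\!\!\!\big(w(x+z)-w(x)-z\cdot Dw(x)\mathbf 1_{|z|\le 1}\big)\dd\mu(z).
\]
Since $\supp(\mu)\subseteq G_\mu$, the measure is carried by $G_\mu=V_\mu\oplus\Lambda$, so on the set $V_\mu$ the measure is actually carried by $V_\mu\cap G_\mu$, and by $G_\mu$-periodicity of $w$ we have $w(x+z)=w(x)$ for all $z\in V_\mu$ (Lemma \ref{suppmuPerGsuppmuPer} gives $G_\mu$-periodicity hence $V_\mu$-periodicity since $V_\mu\subseteq G_\mu$), and $z\cdot Dw(x)=0$ for $z\in V_\mu$ by Remark \ref{rem:per}. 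Hence the first integral vanishes identically. For the remaining integral, $G_\mu$-periodicity again kills $w(x+z)-w(x)$ for $z\in G_\mu\supseteq\supp(\mu)$, leaving only $-\int_{\R^d\setminus(\{|z|\le1\}\cap V_\mu)} z\cdot Dw(x)\mathbf 1_{|z|\le1}\dd\mu(z)=-\big(\int_{\{|z|\le1\}\setminus V_\mu} z\dd\mu(z)\big)\cdot Dw(x)=c_\mu\cdot Dw(x)$.

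The main technical point to handle with care is the \emph{integrability and splitting justification}: the original integrand in \eqref{def:levy1} is only integrable against $\mu$ because of the $\min\{|z|^2,1\}$ control from \eqref{as:mus}, and splitting it into the difference of a $(w(x+z)-w(x))$-term and a $z\cdot Dw(x)\mathbf 1_{|z|\le1}$-term is not a priori legitimate near $z=0$. I would address this by introducing a small parameter: for $\delta>0$ let $\cL^\mu[w](x)=\lim_{\delta\to0}\int_{|z|>\delta}(\cdots)\dd\mu(z)$ (the integrand being genuinely $\mu$-integrable away from the origin since $w\in C^\infty_{\textup{b}}$ and $\int_{|z|>\delta}\dd\mu<\infty$), then on $\{|z|>\delta\}$ freely use $G_\mu$-periodicity to replace $w(x+z)-w(x)$ by $0$, obtaining $\int_{|z|>\delta}(\cdots)\dd\mu(z)=-\int_{\delta<|z|\le1} z\dd\mu(z)\cdot Dw(x)$. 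Finally let $\delta\to0$: by Proposition \ref{def-prop} (really its proof, via Lemma \ref{dis}) the integral $\int_{\{|z|\le1\}\setminus V_\mu}|z|\dd\mu(z)$ is finite because an open ball $B\ni 0$ separates $0$ from the rest of $G_\mu=V_\mu\oplus\Lambda$, and since $\supp(\mu)\setminus V_\mu$ stays outside $B$, the small jumps near $0$ all lie in $V_\mu$ where $z\cdot Dw(x)=0$; hence $\int_{\delta<|z|\le1} z\dd\mu(z)\to\int_{\{|z|\le1\}\setminus V_\mu} z\dd\mu(z)=-c_\mu$ as $\delta\to0$.

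So the core identity I would actually write down is: for each fixed $x$, using $w(x+z)=w(x)$ for $\mu$-a.e.\ $z$ (valid since $\supp(\mu)\subseteq G_\mu$ and $w$ is $G_\mu$-periodic),
\begin{align*}
\cL^\mu[w](x)
&=\int_{\R^d\setminus\{0\}}\big(w(x+z)-w(x)\big)\dd\mu(z)-\int_{\R^d\setminus\{0\}} z\cdot Dw(x)\,\mathbf 1_{|z|\le1}\dd\mu(z)\\
&=0-\int_{\{|z|\le1\}} z\,\dd\mu(z)\cdot Dw(x),
\end{align*}
where the splitting is justified because, after restricting to $\supp(\mu)$, the first integrand is identically zero and the second is absolutely integrable: on $\{|z|\le1\}\cap V_\mu$ one has $z\cdot Dw(x)=0$ by Remark \ref{rem:per}, so effectively $\int_{\{|z|\le1\}} z\,\dd\mu(z)\cdot Dw(x)=\int_{\{|z|\le1\}\setminus V_\mu} z\,\dd\mu(z)\cdot Dw(x)=-c_\mu\cdot Dw(x)$, the last integral being finite by Proposition \ref{def-prop}.

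The hardest part, and the only place where real care is needed rather than bookkeeping, is legitimizing the term-by-term splitting of the L\'evy integrand at the origin; everything else is an immediate consequence of $G_\mu$-periodicity (to annihilate the finite-difference term), Remark \ref{rem:per} (to annihilate the drift correction along $V_\mu$), and the structural decomposition $G_\mu=V_\mu\oplus\Lambda$ together with Lemma \ref{dis} (to guarantee that the only accumulation of jumps at $z=0$ happens \emph{inside} $V_\mu$, which is exactly what makes $c_\mu$ finite and makes the splitting harmless). I expect to present the $\delta$-truncation version for rigor but may relegate it to a short remark if the intended audience is comfortable with the principal-value-type cancellation.
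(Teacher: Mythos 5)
Your proof is correct and follows essentially the same route as the paper's: $G_\mu$-periodicity kills $w(x+z)-w(x)$ on $\supp(\mu)$, Remark \ref{rem:per} kills $z\cdot Dw(x)$ on $V_\mu$, and Proposition \ref{def-prop} supplies the finiteness that turns the remaining term into $c_\mu\cdot Dw(x)$; the paper states this in one line, since one only rewrites the (absolutely convergent) integrand pointwise on $\supp(\mu)$ rather than splitting a conditionally convergent integral. One small slip in your $\delta$-truncation paragraph: the \emph{vector-valued} limit $\int_{\delta<|z|\le1}z\,\dd\mu(z)\to -c_\mu$ can fail when $\int_{\{|z|\le1\}\cap V_\mu}|z|\,\dd\mu(z)=\infty$, so keep $Dw(x)$ inside the integral (as in your ``core identity''), where the $V_\mu$-part vanishes identically and the limit follows by dominated convergence.
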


\begin{proof}
 Using that $\int_{\R^d\setminus\{0\}} f \dd \mu=\int_{\textup{supp} (\mu)} f \dd \mu$, we have
\begin{align*}
  \cL^\mu[w](x) 
  = - \int_{\R^d \setminus \{0\}} z\cdot Dw(x)
  \mathbf{1}_{|z| \leq 1}\dd\mu(z)
\end{align*}
because $w(x+z)-w(x)=0$ for all $x \in \R^d$ and $z \in \textup{supp}(\mu) \subset G_\mu$. The result is thus immediate from Remark \ref{rem:per} and Proposition \ref{def-prop}.
\end{proof}

\subsection{ Proofs of Theorems \ref{thm:Liouville} and \ref{thm:PeriodGeneralOp}}

We are now in a position to prove our main results. We start with 
Theorem \ref{thm:PeriodGeneralOp} which characterizes
all bounded solutions of $\cL[u]=0$ in $\R^d$ as periodic functions
and specifies the set of admissible periods.

\begin{proof}[Proof of Theorem \ref{thm:PeriodGeneralOp}]
By Lemma \ref{lem:smoothReduction} we can assume that $u \in
C_\textup{b}^\infty(\R^d)$. 

\medskip

\noindent\eqref{a3} $\Rightarrow$ \eqref{b3} \ Since $\cL[u] =0$ in $\R^d$, $u$
is  $G_\mu$-periodic by 
Proposition \ref{lem:L0ImpusuppmuPer-bis}.
Proposition \ref{lem:drift} then implies that 
$$
0=\cL[u] = \cL^{\sigma,b}[u]+c_\mu\cdot Du=
\cL^{\sigma,b+c_\mu}[u]\quad \text{in}\quad \R^d,
$$
which by Proposition  \ref{thm:PeriodLiouvilleLocal} shows that $u$ is also
$W_{\sigma,b+c_\mu}$-periodic. It is now easy to see that $u$ is $\overline{G_\mu+W_{\sigma,b+c_\mu}}$-periodic.
\medskip

\noindent\eqref{b3} $\Rightarrow$ \eqref{a3} \ Since $u$ is both $G_\mu$ and
$W_{\sigma,b+c_\mu}$-periodic, by first applying  Proposition \ref{lem:drift} and then
Proposition \ref{thm:PeriodLiouvilleLocal},  
$\cL[ u]
=\cL^{\sigma,b+c_\mu}[u]=0$ in $\R^d$. 
\end{proof}

We now prove Theorem \ref{thm:Liouville} on necessary and sufficient
conditions for $\cL$ to satisfy the Liouville
property. We will use the following consequence of Theorem \ref{decom:opt}.

\begin{corollary}\label{pro:group-multid}  
A subgroup $G$ of $\R^d$ is dense if and only if there are no $c \in
\R^d$ and codimension 1  subspace  $H \subset \R^d$ such that $G \subseteq H+c
\Z$.
\end{corollary}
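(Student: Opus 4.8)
The plan is to prove both implications of Corollary~\ref{pro:group-multid} by appealing to the structure theorem for closed subgroups (Theorem~\ref{decom:opt}) together with the description of the subspace $V_G$ in Lemma~\ref{pro:def-V}. The key observation is that the negation of density of $G$ should be recast as a statement about the closure $\overline{G}$, which by Theorem~\ref{decom:opt} splits as $\overline{G} = V \oplus \Lambda$ with $V$ a vector subspace and $\Lambda$ a relative lattice satisfying $V \cap \operatorname{span}_{\R}\Lambda = \{0\}$.

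For the direction ``$G$ not dense $\Rightarrow$ bad containment exists'': if $\overline{G} \neq \R^d$, then in the decomposition $\overline{G} = V \oplus \Lambda$ we must have $\dim V + \operatorname{rank}\Lambda < d$, so $\operatorname{span}_{\R}\overline{G} = V \oplus \operatorname{span}_{\R}\Lambda$ is a proper subspace; I would first enlarge it to a codimension~$1$ subspace if it happens to be smaller, so without loss of generality $\dim(V \oplus \operatorname{span}_{\R}\Lambda) = d-1$ or we pass to the easier case where $G$ is already contained in a hyperplane through the origin (take $c=0$). In the genuine case, $\operatorname{span}_{\R}\Lambda$ is $1$-dimensional, spanned by a single vector $a$, and $\Lambda = a\Z$ (after rescaling $a$); then $H := V$ has codimension~$1$, and $\overline{G} = V \oplus a\Z \subseteq H + a\Z$, giving the claim with $c = a$. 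One must be slightly careful when $\Lambda = \{0\}$: then $\overline{G} = V$ is a proper subspace, which we pad to any codimension~$1$ subspace $H \supseteq V$ and take $c = 0$.

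For the converse ``bad containment $\Rightarrow$ $G$ not dense'': if $G \subseteq H + c\Z$ for a codimension~$1$ subspace $H$ and some $c$, then since $H + c\Z$ is a closed set (it is a countable union of parallel translates of the closed subspace $H$, and these translates are uniformly separated because $c \notin H$ forces the $H$-cosets $kc + H$ to be distinct and discretely spaced), we get $\overline{G} \subseteq H + c\Z \subsetneq \R^d$, so $G$ is not dense. If $c \in H$ then $H + c\Z = H$ and the conclusion is even more immediate. The only mild subtlety is verifying that $H + c\Z$ is closed; this is where Lemma~\ref{dis} (or a direct argument: the projection $\R^d \to \R^d/H \cong \R$ sends $H+c\Z$ onto a discrete subgroup of $\R$, whose preimage is closed) does the work.

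The main obstacle, such as it is, is purely bookkeeping: matching up the ``codimension~$1$'' hypothesis in the corollary with the possibly-lower-dimensional $V \oplus \operatorname{span}_{\R}\Lambda$ coming out of Theorem~\ref{decom:opt}, and handling the degenerate extreme cases ($\Lambda = \{0\}$, or $c \in H$, or $\operatorname{span}_{\R}\Lambda$ of dimension $>1$ which cannot occur once we have arranged codimension exactly~$1$). There is no real analytic content beyond Theorem~\ref{decom:opt}; the corollary is essentially a repackaging of the structure theorem in a form convenient for the Liouville proof, where one wants to say ``$G_\mu + W_{\sigma,b+c_\mu}$ is dense iff it is not trapped in any hyperplane-plus-lattice.''
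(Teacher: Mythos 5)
Your second implication (a containment $G \subseteq H+c\Z$ prevents density) is correct and is essentially the paper's argument: one checks that $H+c\Z$ is closed, either because $c\in H$ so that $H+c\Z=H$, or by projecting onto a one-dimensional complement of $H$, exactly as the paper does via the coordinate $x\mapsto \lambda_x$.

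The first implication, however, has a genuine gap. Your opening claim --- that $\overline{G}\neq\R^d$ forces $\dim V+\operatorname{rank}\Lambda<d$, hence that $\textup{span}_{\R}\overline{G}$ is a proper subspace --- is false. Take $G=\Z^d$ (already $d=2$): it is not dense, yet in its decomposition $V=\{0\}$, $\Lambda=\Z^d$, so $\dim V+\operatorname{rank}\Lambda=d$ and $\textup{span}_{\R}\overline{G}=\R^d$. Non-density is equivalent to $V\neq\R^d$, not to the span of $\overline{G}$ being proper. As a consequence, your case analysis never treats the groups whose closure spans all of $\R^d$ without being dense, i.e. those with lattice rank $m=d-\dim V\geq 1$; in particular for $m\geq 2$ your ``genuine case'' ($\Lambda=a\Z$, $H:=V$) does not apply, since then $H=V$ has codimension $m>1$ and $\Lambda$ is not cyclic. (The ``genuine case'' is also internally inconsistent: if $\dim V=d-1$ and $\operatorname{rank}\Lambda=1$ with $V\cap\textup{span}_{\R}\Lambda=\{0\}$, then $V\oplus\textup{span}_{\R}\Lambda=\R^d$, contradicting the properness of the span you had just asserted.) The paper's proof handles arbitrary rank $m\geq1$ by a different choice: writing $\Lambda=\oplus_{i=1}^m a_i\Z$, it places $W:=V\oplus\textup{span}_{\R}\{a_i:i\neq m\}$, of dimension $\dim V+m-1\leq d-1$, inside a codimension $1$ subspace $H$ and takes $c:=a_m$, so that $\overline{G}=V\oplus\Lambda\subseteq H+c\Z$; the case $\Lambda=\{0\}$ is the padding argument with $c=0$ that you do have. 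Your proof can be repaired by adopting exactly this splitting-off of one lattice generator, but as written the forward direction fails already for $G=\Z^2\subset\R^2$.
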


\begin{proof} 
Let us argue by contraposition for both the ``only if'' and ``if'' parts.

\medskip

\noindent ($\Rightarrow$) \ Assume $G \subseteq H+c \Z$ for some codimension 1 space $H$ and $c \in \R^d$. If $c \in H$, then $\overline{G} \subseteq \overline{H}=H \neq \R^d$. If $c \notin H$, then $
\R^d=H \oplus \textup{span}_{\R} \{c\}
$,
and each $x \in \R^d$ can be written as $x=x_H+\lambda_x c$ for a
unique $(x,\lambda_x) \in H \times \R$. Hence $H+c \Z=\{x:\lambda_x
\in \Z\}$ is closed by continuity of the projection $x \mapsto
\lambda_x$, and $\overline{G} \subseteq H+c \Z \neq \R^d$. 

\medskip

\noindent ($\Leftarrow$) \ Assume $\overline{G} \neq \R^d$. By Theorem
\ref{decom:opt},  $\overline{G}=V \oplus
\Lambda$ for a subspace $V$ and lattice
$\Lambda$ with $V \cap \textup{span}_\R \Lambda=\{0\}$. It
follows that the dimensions $n$ of $V$ and $m$ of the vector space
$\textup{span}_\R \Lambda$ satifsfy $n<d$ and $n+m \leq d$. If $m=0$, $G \subseteq V \subseteq H$ for some
codimension 1 space $H$. If $m \geq 1$, then
$\Lambda=\oplus_{i=1}^m a_i \Z$ for some basis $\{a_1,\dots,a_m\}$ of
$\textup{span}_\R \Lambda$. Let $W:=V \oplus
\textup{span}_\R \{a_i:i\neq m\}$ for $m>1$ and $W:=V$ for $m=1$. Then
$W$ is of dimension $n+m-1 \leq d-1$ and contained in some codimension 1 space
$H$. Hence $G \subseteq H+c \Z$ with $c=a_m$.
\end{proof}

\begin{proof}[Proof of Theorem \ref{thm:Liouville}]\

\smallskip

\noindent\eqref{label:thmLiou:c} $\Rightarrow$ \eqref{a4} \ If $u\in
L^\infty(\R^d)$ satisfy 
$\cL[u]=0$ in $\mathcal{D}'(\R^d)$, then $u$ is 
$\overline{G_\mu+W_{\sigma,b+c_\mu}}$-periodic  by Theorem 
\ref{thm:PeriodGeneralOp}. Hence $u$ is 
 constant 
by \eqref{label:thmLiou:c}.

\medskip

\noindent\eqref{a4} $\Rightarrow$ \eqref{label:thmLiou:c} \ Assume \eqref{label:thmLiou:c} does not hold  and let us construct a nontrivial $\overline{G_\mu+W_{\sigma,b+c_\mu}}$-periodic $L^\infty$-function.  By Corollary \ref{pro:group-multid}, 
\begin{equation}\label{cep2}
\overline{G_\mu+W_{\sigma,b+c_\mu}} \subseteq H+ c \Z,
\end{equation} 
 for some $c \in
\R^d$ and codimension 1 subspace $H \subset \R^d$. 
We can assume $c
\notin H$ since otherwise \eqref{cep2} will hold if we redefine $c$ to
be any element in $H^c$. As before, each $x \in \R^d$ can be written as $x=x_H+\lambda_x c$  for a unique pair $(x_H,\lambda_x) \in H \times \R$. Now let 
$
U(x):=\cos (2\pi  \lambda_x)
$
and note that for any $h \in H$ and ${ n } \in \Z$,
$$
x+h+{ n }c=\underbrace{(x_H+h)}_{\in H}+\underbrace{(\lambda_x+{ n })}_{\in \R} c,
$$
so that 
$$
U(x+h+{ n }c) =\cos (2\pi (\lambda_x+{ n }))=\cos (2\pi \lambda_x)= U(x).
$$
This proves that $U$ is $(H+c \Z)$-periodic and thus also
$\overline{G_\mu+W_{\sigma,b+c_\mu}}$-periodic.  By Theorem
\ref{thm:PeriodGeneralOp}, $\cL[U]=0$, and we have a nonconstant
counterexample of \eqref{a4}.  Note indeed that $u \in L^\infty(\R^d)$ since it is everywhere bounded by construction and $C^\infty$ (thus measurable) because the projection $x \mapsto \lambda_x$ is linear.  We therefore conclude that \eqref{a4}
implies \eqref{label:thmLiou:c} by contraposition.
\end{proof}

\section{Examples}\label{sec:examples}

Let us give examples for which the Liouville property  holds or fails. We will use Theorem \ref{thm:Liouville} or the following reformulation:  

\begin{corollary}\label{label:thmLiou:d}
Under the assumptions of Theorem \ref{thm:Liouville}, $\cL$ does \textup{not} satisfy the Liouville property if and only if 
\begin{equation}\label{failb}
\supp(\mu)+W_{\sigma,b+c_\mu} \subseteq H + c \Z, 
\end{equation}
for some codimension 1 subspace $H$ and vector $c$ of $\R^d$.
 \end{corollary}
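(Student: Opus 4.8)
The plan is to derive Corollary \ref{label:thmLiou:d} directly from Theorem \ref{thm:Liouville} by negating both equivalent statements and then invoking Corollary \ref{pro:group-multid}. By Theorem \ref{thm:Liouville}, $\cL$ fails the Liouville property exactly when $\overline{G_\mu+W_{\sigma,b+c_\mu}}\neq \R^d$, i.e. precisely when the subgroup $G_\mu+W_{\sigma,b+c_\mu}$ is not dense in $\R^d$. By Corollary \ref{pro:group-multid}, this non-density is equivalent to the existence of a vector $c\in\R^d$ and a codimension $1$ subspace $H\subset\R^d$ with $G_\mu+W_{\sigma,b+c_\mu}\subseteq H+c\Z$.

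First I would observe that the containment $G_\mu+W_{\sigma,b+c_\mu}\subseteq H+c\Z$ is equivalent to $\supp(\mu)+W_{\sigma,b+c_\mu}\subseteq H+c\Z$. The inclusion $\supp(\mu)\subseteq G_\mu$ (by definition \eqref{def-Gmu}, since $G_\mu$ is the closed subgroup generated by $\supp(\mu)$) gives one direction immediately. For the converse, I would use that $H+c\Z$ is itself a closed subgroup of $\R^d$: indeed when $c\notin H$ it is closed and a subgroup by the continuity-of-projection argument already used in the proof of Corollary \ref{pro:group-multid}, and when $c\in H$ it equals $H$, which is trivially a closed subgroup. Since $H+c\Z$ is a subgroup containing $\supp(\mu)$, it contains $G(\supp(\mu))$; since it is closed, it contains $G_\mu=\overline{G(\supp(\mu))}$. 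As $H+c\Z$ is also a subspace sum that already contains $W_{\sigma,b+c_\mu}$ whenever $\supp(\mu)+W_{\sigma,b+c_\mu}\subseteq H+c\Z$ (take the $\supp(\mu)$-part to be any fixed element, e.g. translate; more directly, $W_{\sigma,b+c_\mu}\subseteq (H+c\Z)-z_0$ for $z_0\in\supp(\mu)$, but since $H+c\Z$ is a group and $z_0\in H+c\Z$, this gives $W_{\sigma,b+c_\mu}\subseteq H+c\Z$), we conclude $G_\mu+W_{\sigma,b+c_\mu}\subseteq H+c\Z$. Hence the two containments are equivalent, which is exactly \eqref{failb}.

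Chaining these equivalences yields the claim: $\cL$ does not satisfy the Liouville property $\iff$ $\overline{G_\mu+W_{\sigma,b+c_\mu}}\neq\R^d$ $\iff$ there exist $c\in\R^d$ and a codimension $1$ subspace $H$ with $G_\mu+W_{\sigma,b+c_\mu}\subseteq H+c\Z$ $\iff$ there exist such $c,H$ with $\supp(\mu)+W_{\sigma,b+c_\mu}\subseteq H+c\Z$, which is \eqref{failb}.

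I expect the only nontrivial point to be the passage between the $G_\mu$-version and the $\supp(\mu)$-version of the inclusion, and within that, the bookkeeping needed to see that $W_{\sigma,b+c_\mu}$ sits inside $H+c\Z$ as soon as $\supp(\mu)+W_{\sigma,b+c_\mu}$ does — this hinges on $H+c\Z$ being a genuine subgroup (so that subtracting a fixed element of $\supp(\mu)\subseteq H+c\Z$ keeps us inside) together with $W_{\sigma,b+c_\mu}$ being a subspace. Everything else is a formal restatement of Theorem \ref{thm:Liouville} and Corollary \ref{pro:group-multid}, so the corollary follows with essentially no new computation.
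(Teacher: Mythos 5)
Your proposal is correct and takes essentially the same route as the paper: negate Theorem \ref{thm:Liouville}, apply Corollary \ref{pro:group-multid}, and pass between the $G_\mu$-version and the $\supp(\mu)$-version of the inclusion. The point you spell out explicitly --- that $H+c\Z$ is a closed subgroup, so containing $\supp(\mu)+W_{\sigma,b+c_\mu}$ forces it to contain $G_\mu+W_{\sigma,b+c_\mu}$ --- is precisely what is implicit in the paper's one-line identity $\overline{G(\supp(\mu)+W_{\sigma,b+c_\mu})}=\overline{G_\mu+W_{\sigma,b+c_\mu}}$, so no new idea is involved.
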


\begin{proof}
Just note that $\overline{G(\supp(\mu)+W_{\sigma,b+c_\mu})}=\overline{G_\mu+W_{\sigma,b+c_\mu}}$ and apply Theorem \ref{thm:Liouville} and Corollary \ref{pro:group-multid}.
\end{proof}

\begin{example}\label{ex1}
\begin{enumerate}[{\rm (a)}]
\item For nonlocal operators $\cL=\cL^\mu$ with $\mu$ symmetric, \eqref{failb} reduces to 
\begin{equation}\label{fail}
\supp(\mu) \subseteq H+c \mathbb{Z},
\end{equation}
  for some $H$ of codimension $1$ and $c$. This fails  for fractional Laplacians, relativistic Schr\"odinger operators, convolution
operators, or most nonlocal operators appearing in finance whose L\'evy measures contain an open ball in their supports. In particular all these operators have the Liouville property.
\smallskip
\item Even if $\textup{supp} (\mu)$ has an empty interior,
  \eqref{fail} may fail and Liouville still hold. This is e.g. the case for the mean value operator
\begin{equation}\label{mean-value}
\mathcal{M}[u](x)=\int_{|z|=1} \big(u(x+z)-u(x)\big) \dd S(z),
\end{equation}
where $S$ denotes the $d-1$-dimensional surface measure.
\smallskip
\item We may have in fact the Liouville property with just a finite number of points in the support of $\mu$, see Example \ref{ex:finitenumberofpoints}. 
\smallskip
\item The way we have defined the nonlocal operator, if $\cL=\cL^\mu$ with general $\mu$, 
\eqref{failb} reduces to 
\begin{equation}
\label{failc}
\supp(\mu) \subseteq H+c \mathbb{Z} \quad \mbox{and} \quad c_\mu \in H,
\end{equation}
for some $H$ of codimension 1 and $c\in \R^d$. 
We can have  \eqref{fail}  without \eqref{failc} as e.g. for the 1--$d$ measure $\mu=\delta_{-1}+2\delta_{1}$. Indeed $\supp (\mu) \subset \Z$ but $c_\mu=1 \neq 0$. The associated operator $\cL^\mu$ then has the Liouville property even though it would not for any symmetric measure with the same support.
\smallskip
\item
A general operator $\cL=\cL^{\sigma,b}+\cL^\mu$ may satisfy the Liouville property even though each part $\cL^{\sigma,b}$ and $\cL^{\mu}$ does not. 
A simple 3--$d$ example is given by $\cL=\partial_{x_1}^2+\partial_{x_2}+(\partial_{x_3}^2)^{\alpha}$, $\alpha \in (0,1)$. 

Indeed $\sigma=(1,0,0)^\texttt{T}$, $b=(0,1,0)$, $\dd \mu(z)=\frac{c(\alpha) \dd z_3}{|z_3|^{1+2 \alpha}}$ with $c(\alpha)>0$, thus $c_\mu=0$, $W_{\sigma,b}=\R \times \R \times \{0\}$, and $G_\mu=\{0\} \times \{0\}\times \R$, so the result follows from Theorem \ref{thm:Liouville}.
\item For other kinds of interactions between the local and nonlocal parts, see Example \ref{ex:kro}.
\end{enumerate}
\end{example}

\begin{remark}
The Liouville property for the nonlocal operator \eqref{mean-value}
implies the classical Liouville result for the Laplacian, since
$\mathcal{M}[u]=0$ for harmonic functions~$u$.
\end{remark}

In the 1--$d$ case, the general form of the operators which do not satisfy the Liouville property is very explicit. 

\begin{corollary}
Assume $d=1$ and $\cL:C^\infty_\textup{c} (\R) \to C(\R)$ is a linear translation invariant operator satisfying the maximum principle \eqref{mp}. Then the following statements are
equivalent:
\begin{enumerate}[{\rm (a)}]
  \smallskip
\item\label{1da} There are nonconstant $u\in L^\infty(\R)$ satisfying $\cL[u]=0$ in
  $\mathcal{D}'(\R)$.
   \smallskip
\item\label{label:thmLiou:cb} There are $g> 0$ and a nonnegative $\{\omega_n\}_{n} \in l^1(\Z)$ such that
\begin{equation*}
\cL[u](x)=\sum_{n \in \mathbb{Z}}(u(x+n g)-u(x)) \omega_n .
\end{equation*}
\end{enumerate}
\end{corollary}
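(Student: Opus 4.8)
The plan is to deduce this corollary from Theorem~\ref{thm:Liouville} (or Corollary~\ref{label:thmLiou:d}) by making the abstract criterion fully explicit in dimension one. First I would note that by the Courr\`ege representation quoted in the introduction, $\cL=\cL^{\sigma,b}+\cL^\mu$ with $\sigma\in\R^{1\times P}$, $b\in\R$, and $\mu$ a Radon measure on $\R\setminus\{0\}$ satisfying \eqref{as:mus}; we may discard the $c<0$ case as the introduction explains, so this is the complete list of candidates. The codimension~$1$ subspaces of $\R$ are just $H=\{0\}$, so the failure criterion \eqref{failb} becomes: $\supp(\mu)+W_{\sigma,b+c_\mu}\subseteq c\,\Z$ for some $c\in\R$. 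If $c=0$ this forces $\supp(\mu)=\emptyset$ and $b+c_\mu=0$ and $\sigma=0$, i.e. $\cL=0$, which is the trivial (and degenerate) case of (b) with $g$ arbitrary and all $\omega_n=0$; more importantly, $\cL=0$ has all of $L^\infty$ as solutions, so (a) holds. So assume $c\neq 0$. Then $W_{\sigma,b+c_\mu}\subseteq c\,\Z$ forces $W_{\sigma,b+c_\mu}=\{0\}$ (a nonzero subspace of $\R$ is all of $\R$, which is not contained in $c\Z$), hence $\sigma_1=\dots=\sigma_P=0$ and $b+c_\mu=0$, so the local part vanishes and $b=-c_\mu$. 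Also $\supp(\mu)\subseteq c\,\Z$, so $\mu$ is supported on the discrete set $\{nc:n\in\Z\setminus\{0\}\}$.

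Next I would translate the measure-theoretic conclusion into the series form. Writing $g:=|c|>0$ and $\omega_n:=\mu(\{ng\})\ge 0$ for $n\in\Z$ (with $\omega_0:=0$), the support condition says $\mu=\sum_{n\neq 0}\omega_n\,\delta_{ng}$, and \eqref{as:mus} reads $\sum_{n\neq 0}\min\{n^2g^2,1\}\,\omega_n<\infty$; since only finitely many indices have $|n|g\le 1$, this is equivalent to $\sum_{n}\omega_n<\infty$, i.e. $\{\omega_n\}\in l^1(\Z)$. Now I compute $c_\mu$. Since $\supp(\mu)\subset g\Z=G_\mu$ and $g\Z$ is already discrete (its vector-subspace part is $V_\mu=\{0\}$), the definition \eqref{cmu} gives $c_\mu=-\int_{\{|z|\le 1\}}z\dd\mu(z)=-\sum_{0<|n|g\le 1}ng\,\omega_n$. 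Plugging $b=-c_\mu=\sum_{0<|n|g\le 1}ng\,\omega_n$ into $\cL^{\sigma,b}+\cL^\mu$ with $\sigma=0$, the drift term exactly cancels the compensator $-\int z\mathbf 1_{|z|\le 1}\dd\mu$ in \eqref{def:levy1}, leaving $\cL[u](x)=\sum_{n\in\Z}\big(u(x+ng)-u(x)\big)\omega_n$, which is (b). Conversely, for any $g>0$ and nonnegative $\{\omega_n\}\in l^1(\Z)$, the operator in (b) is of the form \eqref{eq:GenOp1}--\eqref{def:localOp1}--\eqref{def:levy1} with $\sigma=0$, $\mu=\sum_n\omega_n\delta_{ng}$, and $b$ chosen as above; then $\supp(\mu)\subseteq g\Z$ and $W_{\sigma,b+c_\mu}=\{0\}\subseteq g\Z$, so \eqref{failb} holds with $H=\{0\}$, $c=g$, and Corollary~\ref{label:thmLiou:d} yields nonconstant bounded solutions, i.e. (a). (Explicitly, $u(x)=\cos(2\pi x/g)$ works, as in the proof of Theorem~\ref{thm:Liouville}.)

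The only mildly delicate point is bookkeeping the cut-off/compensator so that the drift $b$ really is the one making $b+c_\mu=0$, and checking that the resulting $b$ is finite --- which it is, being a finite sum. I expect the main obstacle, such as it is, to be purely notational: carefully separating the trivial case $\cL=0$ (equivalently $c=0$), confirming that $\supp(\mu)\subseteq c\Z$ with $c\neq 0$ forces $V_\mu=\{0\}$ so that the formula for $c_\mu$ simplifies, and verifying the equivalence of the two summability conditions. No new analytic ideas beyond Theorem~\ref{thm:Liouville} and Corollary~\ref{label:thmLiou:d} are needed.
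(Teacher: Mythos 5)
Your proposal is correct and follows essentially the same route as the paper's proof: Courr\`ege's representation plus Corollary \ref{label:thmLiou:d} specialized to $d=1$ (where the only codimension-1 subspace is $H=\{0\}$), which under (a) forces $\sigma=0$, $b+c_\mu=0$ and $\mu=\sum_{n}\omega_n\delta_{ng}$ with nonnegative $\{\omega_n\}\in l^1(\Z)$, after which the compensator cancels against the drift and $\cL$ takes the stated series form, the converse being supplied by $g$-periodic solutions such as $\cos(2\pi x/g)$. The only difference is that you spell out the computation of $c_\mu$ (via $V_\mu=\{0\}$) and the degenerate case $\cL=0$ explicitly, details the paper compresses into a footnote, so there is nothing substantive to add.
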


\begin{proof} 
If \eqref{label:thmLiou:c} holds, any $g$-periodic
  function satisfies $\cL[u]=0$ in $\R$. Conversely, if \eqref{1da}
  holds then $\cL$ is of the form
  \eqref{eq:GenOp1}--\eqref{def:localOp1}--\eqref{def:levy1} by
  \cite{Cou64}. By Corollary \ref{label:thmLiou:d}, there is $g \geq0$
  such that $\textup{supp} (\mu)+W_{\sigma,b+c_\mu} \subseteq g
  \mathbb{Z}$. In particular $\sigma=b+c_\mu=0$ and $\mu$ is a a sum of
  Dirac measures: $\mu=\sum_{n \in \mathbb{Z}} \omega_n \delta_{n g}$.\footnote{If $g=0$ then $\mu=0$ and the rest of the proof is trivial.} By \eqref{as:mus}, each $\omega_n \geq 0$ and $\sum_{n \in \mathbb{Z}} \omega_n<\infty$. Injecting these facts into \eqref{eq:GenOp1}--\eqref{def:localOp1}--\eqref{def:levy1}, we can easily rewrite $\cL$ as in \eqref{label:thmLiou:cb}.
\end{proof}

\begin{example}\label{ex2}
\begin{enumerate}[{\rm (a)}]
\item In 1--$d$, the Liouville property holds for any nontrivial operator with nondiscrete L\'evy measure.
\item For discrete L\'evy measures, we need $\sigma \neq 0$ or $b \neq
  -c_\mu$ or $G_\mu=\R$ for Liouville to hold. The condition
  $G_\mu=\R$ is typically satisfied if $\overline{\textup{supp} (\mu)}^{\R}$ 
  has an accumulation point or if $\textup{supp} (\mu)$ contains two points  $z_1,z_2$  with
  irrationial ratio  $\frac{z_1}{z_2}$  (see Theorem
  \ref{thm:CharKron}).  Another example is when $\textrm{supp} (\mu) =\{\frac{n^2+1}{n}\}_{n \geq 1}$, which has no accumulation point or contains any pair with irrational ratio.
\end{enumerate}
\end{example}

Let us continue with interesting consequences of the Kronecker theorem
on Diophantine approximation (p. 507 in  \cite{GoMo16}).

\begin{theorem}[Kronecker theorem]\label{thm:CharKron}
Let $c=(c_1,\dots,c_d)\in \R^d$. Then $\overline{c \Z+\Z^d}=\R^d$ if and only if 
$\{1,c_1,\dots,c_d\}$ is linearly independent over $\Q$.
\end{theorem}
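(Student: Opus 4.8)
The plan is to prove the Kronecker theorem (Theorem \ref{thm:CharKron}) by establishing the contrapositive in each direction, since both directions are most naturally phrased as: the closure fails to be all of $\R^d$ exactly when a $\Q$-linear dependence exists. First I would treat the easy direction. Suppose $\{1,c_1,\dots,c_d\}$ is linearly dependent over $\Q$, so there exist integers $q_0,q_1,\dots,q_d$, not all zero, with $q_0 + \sum_{i=1}^d q_i c_i = 0$. By clearing denominators we may take $q_i \in \Z$; moreover $(q_1,\dots,q_d) \neq 0$, since otherwise $q_0 = 0$ too. Then the nonzero linear functional $\ell(x) := \sum_{i=1}^d q_i x_i$ takes integer values on $\Z^d$ and the value $-q_0 \in \Z$ on $c$, hence $\ell$ takes values in $\Z$ on the whole subgroup $c\Z + \Z^d$. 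Since $\ell$ is continuous and $\Z$ is closed, $\ell$ takes integer values on $\overline{c\Z+\Z^d}$ as well, so this closure is contained in the proper closed set $\ell^{-1}(\Z) \neq \R^d$. This proves the ``only if'' direction.

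For the converse I would argue the contrapositive using Corollary \ref{pro:group-multid}: assume $G := c\Z + \Z^d$ is not dense, and deduce a $\Q$-linear dependence among $\{1,c_1,\dots,c_d\}$. By Corollary \ref{pro:group-multid} there is a codimension-$1$ subspace $H \subset \R^d$ and a vector $v \in \R^d$ with $G \subseteq H + v\Z$. Let $H = \{x : \ell(x) = 0\}$ for some nonzero linear functional $\ell(x) = \sum_i \alpha_i x_i$; then $\ell(G) \subseteq \ell(H) + \ell(v)\Z = \ell(v)\Z$, a discrete (possibly trivial) subgroup of $\R$. Applying $\ell$ to the generators of $G$: $\ell(e_i) = \alpha_i \in \ell(v)\Z$ for each standard basis vector $e_i$, and $\ell(c) = \sum_i \alpha_i c_i \in \ell(v)\Z$. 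If $\ell(v) = 0$ then all $\alpha_i = 0$, contradicting $\ell \neq 0$; so $\ell(v) \neq 0$ and after rescaling $\ell$ by $\ell(v)^{-1}$ we may assume $\alpha_i \in \Z$ for all $i$ (not all zero) and $\sum_i \alpha_i c_i \in \Z$. Writing $\sum_i \alpha_i c_i = -\alpha_0$ with $\alpha_0 \in \Z$ gives the relation $\alpha_0 + \sum_i \alpha_i c_i = 0$ with integer coefficients not all zero and $(\alpha_1,\dots,\alpha_d) \neq 0$, i.e.\ $\{1,c_1,\dots,c_d\}$ is $\Q$-linearly dependent. This completes the contrapositive, hence the ``if'' direction.

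The main obstacle — and the only real content — is the converse direction, and it is essentially already packaged for us: the nontrivial input is Corollary \ref{pro:group-multid} (equivalently Theorem \ref{decom:opt}, the structure theorem for closed subgroups of $\R^d$), which replaces the classical pigeonhole/simultaneous-Diophantine-approximation argument usually used to prove Kronecker's theorem. Given that corollary, the proof is just careful bookkeeping with a linear functional: extracting integrality of the coefficients $\alpha_i$ from the condition $\alpha_i \in \ell(v)\Z$, and checking the non-degeneracy ($\ell(v) \neq 0$ and $(\alpha_1,\dots,\alpha_d) \neq 0$) so that the resulting $\Q$-relation genuinely involves the $c_i$. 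One small point to be careful about: one must verify that the subgroup $G = c\Z + \Z^d$ is exactly the group whose density is in question — it contains $\Z^d$, so it is never trivial, and in fact $\overline{G}$ contains $\Z^d$; the whole question is whether the extra generator $c$ forces density, which is exactly what the linear-independence condition controls.
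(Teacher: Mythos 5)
Your proof is correct in both directions, but it is worth noting that the paper itself does not prove Theorem \ref{thm:CharKron} at all: it is quoted from the literature (p.~507 in \cite{GoMo16}), where it is established by classical Diophantine-approximation arguments. What you have done instead is derive Kronecker's theorem from Corollary \ref{pro:group-multid}, i.e.\ ultimately from the structure theorem for closed subgroups of $\R^d$ (Theorem \ref{decom:opt}), and this works: the easy direction is the standard observation that a rational relation $q_0+\sum_i q_i c_i=0$ with $(q_1,\dots,q_d)\neq 0$ traps $c\Z+\Z^d$ inside the proper closed set $\ell^{-1}(\Z)$ for $\ell(x)=\sum_i q_i x_i$; for the converse, your bookkeeping is sound --- from $G\subseteq H+v\Z$ with $H=\ker\ell$ you get $\ell(e_i)=\alpha_i\in\ell(v)\Z$ and $\ell(c)\in\ell(v)\Z$, the case $\ell(v)=0$ is correctly excluded because it would force $\ell=0$, and dividing by $\ell(v)$ produces an integer relation $\alpha_0+\sum_i m_i c_i=0$ with $(m_1,\dots,m_d)\neq 0$, which is exactly $\Q$-linear dependence of $\{1,c_1,\dots,c_d\}$. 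There is also no circularity: Corollary \ref{pro:group-multid} rests only on Theorem \ref{decom:opt} and Lemmas \ref{dis}--\ref{pro:def-V}, none of which invoke Kronecker's theorem, and the paper only uses Theorem \ref{thm:CharKron} afterwards in the examples of Section \ref{sec:examples}. So your route buys a short, self-contained proof within the paper's own toolkit, replacing the external citation; the trade-off is that it leans on the (nontrivial, cited) subgroup structure theorem, whereas the classical proofs of Kronecker's theorem are independent of that machinery.
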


We can use this result to get the Liouville property with just a finite number of points in the support of the L\'evy measure. 
\begin{example}\label{ex:finitenumberofpoints}
\begin{enumerate}[{\rm (a)}]
\item Consider the operator
\[
\cL[u](x)=u(x+c) +\sum_{i=1}^d u(x+e_i) - (d+1) u(x)
\]
for some $c=(c_1,\ldots,c_d)\not=0$ where $\{e_1,\dots,e_d\}$ is the canonical basis.  Liouville holds if and only if $\{1,c_1,\ldots,c_d\}$ is linearly independent over $\Q$. Indeed $G_\mu=\overline{c \Z+\Z^d}$, so the result follows from Theorems \ref{thm:Liouville} and \ref{thm:CharKron}.
\item For more general operators $\cL[u](x)=\sum_{z \in S}
  (u(x+z)-u(x))\omega(z)$, with $S$ finite and $\omega(\cdot)>0$, we
  may have similar results by applying Theorem \ref{thm:CharKron} (or
  variants) and changing coordinates.
\end{enumerate}
\end{example}

Let us end with an illustration of how the local part may interact
with such nonlocal operators. We give 2--$d$ examples of the form
$$
\cL[u](x)=\tilde{b}_1u_{x_1}+\tilde{b}_2u_{x_2}+u(x+z_1)+u(x+z_2)-2 u(x)
$$
where $\tilde{b}$ represents the full drift $b+c_\mu$.
\begin{example}\label{ex:kro}
\begin{enumerate}[\rm (a)]
\item If $\tilde{b}, z_1, z_2$ are collinear, Liouville does not hold by Theorem~\ref{thm:Liouville}.
\item If  $z_1$ and $z_2$ are collinear and linearly independent
  of $\tilde{b}$ as in
\begin{equation*}
\cL[u](x)=u_{x_1}(x)+u(x_1,x_2+\alpha)+u(x_1,x_2+\beta)-2 u(x),
\end{equation*}
then the Liouville property holds if and only if $\frac{\alpha}{\beta} \notin
\Q$. 

Indeed, here we have $G_{\mu}=\{0\} \times \overline{\alpha \Z+\beta \Z}$ and
$\textup{span}_\R \{b+c_\mu=(1,0)\}=\R \times \{0\}$, so we conclude by Theorems \ref{thm:Liouville} and~\ref{thm:CharKron}.
\item If $\{z_1,z_2\}$ is a basis of $\R^2$ as in
\begin{equation*}
\begin{split}
\hspace{6mm} \cL[u](x)  =&\tilde{b}_1 u_{x_1}(x)+\tilde{b}_2 u_{x_2}(x)+u(x_1+1,x_2)+u(x_1,x_2+1)-2 u(x),
\end{split}
\end{equation*}
then Liouville holds if and only if $\tilde{b}_1\not=0$ and
$\frac{\tilde{b}_2}{\tilde{b}_1} \notin \Q$.

Indeed, let us define $G:=G_\mu+W_{\sigma,b+c_\mu}$ where
we note that $G_\mu=\Z^2$ and
$W_{\sigma,b+c_\mu}=\textup{span}_{\R}\{(\tilde{b}_1,\tilde{b}_2)\}$. If
$\tilde{b}_1=0$ or $\tilde{b}_2=0$, then $\overline{G}\subseteq \Z \times \R$ or $\R
\times \Z$ which is not $\R^2$. Assume now that
$\tilde{b}_1,\tilde{b}_2\not=0$ and $\frac{\tilde{b}_2}{\tilde{b}_1}
\in \Q$, i.e., $\frac{\tilde{b}_2}{\tilde{b}_1}=\frac{p}{q}$ with $p,q
\neq 0$. Then
$$G\subseteq T:=\Big(\frac{1}{p},0\Big)\Z +
\textup{span}_{\R}\Big\{\Big(1,\frac{\tilde{b}_2}{\tilde{b}_1}\Big)\Big\}=\Big\{\Big(\frac{k}{p}+r,
r\frac{p}{q}\Big): k \in \Z, \ r \in \R\Big\}$$
since
$\textup{span}_{\R}\{(\tilde{b}_1,\tilde{b}_2)\}=\textup{span}_{\R}\{(1,\frac{\tilde{b}_2}{\tilde{b}_1})\}\subset
T$ and $\Z^2\subset T$. The last statement follows since for any
$(m,n)\in \Z^2$, we can take $k=pm-qn\in \Z$ and $r=n \frac{q}{p}\in
\R$. Since $\overline{T}\neq \R^2$, Liouville does not hold by Theorem \ref{thm:Liouville} and Corollary
\ref{pro:group-multid}. 

Conversely, assume $\tilde{b}_1,\tilde{b}_2\not=0$ and
$\frac{\tilde{b}_2}{\tilde{b}_1} \notin \Q$. Then $(0,\frac{\tilde{b}_2}{\tilde{b}_1})=(-1,0)+(1,\frac{\tilde{b}_2}{\tilde{b}_1})\in G$ and since $(0,1) \in G$, we get that $\{0\} \times (\Z+\frac{\tilde{b}_2}{\tilde{b}_1} \Z) \subset G$.
By Theorem \ref{thm:CharKron}, $\{0\} \times \R \subset
\overline{G}$. Arguing similarly with
$(\frac{\tilde{b}_1}{\tilde{b}_2},0)$, we find that $\R \times
\{0\}\subset \overline{G}$. Hence $\overline{G}=\R^2$ and
Liouville holds by Theorem \ref{thm:Liouville}.
\end{enumerate}
\end{example}

\let\oldaddcontentsline\addcontentsline
\renewcommand{\addcontentsline}[3]{}
\section*{Acknowledgements}
F.d.T., J.E.,  and E.R.J. were  supported  by the Toppforsk (research excellence) project Waves and Nonlinear Phenomena (WaNP), grant no. 250070 from the Research Council of Norway; F.d.T. also by the Basque Government through the BERC 2018-2021 program, and the Spanish Ministry of Science, Innovation and Universities: BCAM Severo Ochoa accreditation SEV-2017-0718, and the Spanish research project PGC2018-094522-B-I00.

F.d.T. and J.E. are grateful to Laboratoire de Math\'ematiques de Besan\c{c}on (LMB, UBFC) and Ecole Nationale Sup\'erieure de M\'ecanique et des Microtechniques (ENSMM) for hosting them during their visit in May  2018.

During the final preparation of this paper, we appreciated the
feedback from the community which helped us to put the paper in
context and also to improve the presentation.

\let\oldaddcontentsline\addcontentsline
\renewcommand{\addcontentsline}[3]{}

\end{document}